\newtheorem{theorem}{Theorem}[section]%
\newtheorem{lemma}[theorem]{Lemma}%
\newtheorem{cor}{Corollary}[section]%
\newtheorem{prop}[theorem]{Proposition}%
\newtheorem{exam}{Example}[section]
\newcommand{\bref}[5]{\bibitem{#1} {#2} {\it #3} {\bf #4}#5.}
\begin{document}
  \title{The critical group of $C_4\times C_n$
 \thanks{Supported by NSF of the People's Republic of China(Grant
 No. 10871189 and  No. 10671191).}
  }
 \author{Jian Wang,  Yong-Liang Pan\thanks{Corresponding author. Email: ylpan@ustc.edu.cn},\,\,
   \\
  {\small Department of Mathematics, University of Science and Technology
         of China}\\
  {\small Hefei, Auhui 230026, The People's Republic of China}\\
}
\date{}
\maketitle {\centerline{\bf\sc Abstract}\vskip 8pt In this paper,
the  critical group structure of the Cartesian product graph $C_4\times C_n$ is
determined, where $n\ge 3$.

\par \vskip 0.5pt {\bf Keywords}  Graph; Laplacian matrix;  Critical group;  Invariant factor; Smith
normal form; Tree number.

{\bf 1991 AMS subject classification:}   15A18, 05C50 \\

\section{Introduction}

Let $G=(V,E)$ be a finite connected graph without self-loops, but
with multiple edges allowed. Then the Laplacian matrix of $G$ is the
$|V|\times |V|$ matrix defined by
 $$L(G)_{uv}=\left\{
\begin{array}{ll}
d(u), &  \text{if}\hspace{0.2cm}u=v, \\
-a_{uv}, & \text{if}\hspace{0.2cm}u\not=v,
\end{array}
\right.\eqno(1.1) $$ where $a_{uv}$ is the number of the edges
joining $u$ and $v$, and $d(u)$ is the degree of $u$.

Regarding $L(G)$ as representing an abelian group homomorphism:
$Z^{|V|}\rightarrow Z^{|V|}$, its cokernel
coker$(L(G))={Z}^{|V|}/\text{im}\, (L(G))$ is an abelian group,
determined by the generators $x_1,\cdots, x_{|V|}$ and  relation
$L(G)X=0$, where $x_i=(0,\cdots,0,1,0,\cdots,0)\in {Z}^{|V|}$, whose
unique nonzero 1 is in position $i$, and $X=(x_1,\cdots,x_{|V|})^t$.
Note that the same symbol $x_i$ denotes both an element of the group
coker$(L(G))$ and a basis element of the free abelian group
${Z}^{|V|}$.

The  finitely generated abelian group coker$(L(G))$ can be described
in terms of  the Smith normal form (or simply SNF) of $L(G)$. Two
integral matrices $A$ and $B$ of order $n$ are equivalent (written
by $A \sim B$) if there are unimodular  matrices $P$ and $Q$ such
that $B=PAQ$. Equivalently, $B$ is obtainable from $A$ by a sequence
of elementary row and column operations: (1) the interchange of two
rows or columns, (2) the multiplication of  any row or column by
$-1$, (3) the addition of any integer times of one row (resp.
column) to another row (resp. column).
 It is easy to see that $A\sim B$  implies that coker$(A)\cong$ coker$(B)$.
Given any  $|V|\times |V|$ unimodular matrices $P$ and $Q$ and any
integral matrix $A$ with $PAQ=$diag$(a_1,\cdots, a_{|V|})$, it is
easy to see that $Z^{|V|}/\mbox{im}(A)\cong (Z/ a_1
Z)\oplus\cdots\oplus(Z/ a_{|V|}Z)$. Here, the rank of $L(G)$ is
$|V|-1$, with kernel generated by the transpose of the vector
$(1,\cdots,1)$. Thus we can assume  the SNF of $L(G)$
is  diag$(t_1,\cdots,t_{|V|-1},0)$, and it induces an isomorphism
$$\mbox{coker}(L(G))\cong K(G)\oplus Z.\eqno(1.2)$$
where $K(G)=\left({Z}/ t_1 {Z}\right)\oplus \left({Z}/t_2 {Z}\right)
\oplus\cdots\oplus\left({Z}/ t_{|V|-1} {Z}\right)$.

In [1] and [5 (Chapter 14)], the finite abelian group $K(G)$ is
defined to be the critical group of $G$. Its invariant factors
$t_1,t_2,\cdots t_{|V|-1}$  can be computed in the following way:
for $1\le i<|V|$, $t_i=\Delta_i/\Delta_{i-1}$ where $\Delta_0=1$ and
$\Delta_i$ is the $i-$th determinantal  divisor of $L(G)$, defined
as the greatest common divisor of all $i\times i$ minor
subdeterminants of $L(G)$. From the well known Kirchhoff's
Matrix-Tree Theorem [7, Theorem 13.2.1] we know that $t_1\cdots
t_{|V|-1}$
 equals the number $\kappa$ of spanning trees of $G$. It follows that  the invariant factors of $K(G)$ can be used to
distinguish pairs of non-isomorphic graphs which have the same
$\kappa$, and so there is considerable interest in their properties.
If $G$ is a simple connected graph, the invariant factor $t_1$ of
$K(G)$ must be equal to 1, however, most of them are not easy to be
determined.

Compared to the number of the results on the spanning tree number
$\kappa$, there are relatively few results describing the critical
group structure of $K(G)$ in terms of the structure of $G$. There
are also very few interesting infinite family of graphs for which
the group structure has been complete determined (see [2,\, 3,\,
4,\, 6,\, 7,\, 8], and the references therein). In this paper, we
describe  the  critical group structure of Cartesian product graph
$C_4\times C_n\, (n\geq 3)$ completely, where $C_n$ is the cycle on
$n$ vertices.

Given two disjoint graphs $G_1=(V_1,E_1)$ and $G_2=(V_2,E_2)$, their
Cartesian product is the graph $G_1\times G_2$ whose vertex set is
the cartesian product $V_1\times V_2$. Suppose $u_1,\, u_2\in V_1$
and $v_1,\, v_2\in V_2$. Then $(u_1,v_1)$ is adjacent to $(u_2,v_2)$
if and only if one of the following conditions satisfied: (i)
$u_1=u_2$ and $(v_1 ,v_2)\in E_2$, or (ii) $(u_1,u_2)\in E_1$ and
$v_1=v_2$. One may view $G_1\times G_2$ as the graph obtained from
$G_2$ by replacing each of its  vertices with a copy of $G_1$, and
each of its edges with $|V_1|$ edges joining corresponding vertices
of $G_1$ in the two copies.  From the definition of the Cartesian
product of two graphs, it is easy to see that there are $n$ layers
of $C_4\times C_n$, each of which is a copy of $C_4$. Let ${Z}_n$
denote ${Z}/n{Z}$, then for $i\in {{Z}}_n,\;  j\in {Z}_4$, let
$v_j^i$ denote the $j$-th vertex in the $i$-th layer of $C_4\times
C_n$.  The vertex $v_j^i$ is adjacent to vertices $v_j^{l}$ and
$v_k^i$, where $l=i\pm 1$,\, $k=j\pm 1(\mod 4)$\,
(see Fig. 1).\\
\setlength{\unitlength}{1.0cm}
\begin{picture}(16,6.5)
\put(1,0){\line(0,1){3}} \put(1,0){\line(1,1){1}}
\put(2,4){\line(-1,-1){1}} \put(2,4){\line(0,-1){3}}
\put(3,0){\line(0,1){3}} \put(3,0){\line(1,1){1}}
\put(4,4){\line(-1,-1){1}} \put(4,4){\line(0,-1){3}}
\put(5,0){\line(0,1){3}} \put(5,0){\line(1,1){1}}
\put(6,4){\line(-1,-1){1}} \put(6,4){\line(0,-1){3}}
\put(9,0){\line(0,1){3}} \put(9,0){\line(1,1){1}}
\put(10,4){\line(-1,-1){1}} \put(10,4){\line(0,-1){3}}
\put(11,0){\line(0,1){3}} \put(11,0){\line(1,1){1}}
\put(12,4){\line(-1,-1){1}} \put(12,4){\line(0,-1){3}}
\put(1,0){\line(1,0){5}}\put(6,0){\line(1,0){0.2}}\put(6.4,0){\line(1,0){0.2}}\put(6.8,0){\line(1,0){0.2}}
\put(7.2,0){\line(1,0){0.2}}\put(7.6,0){\line(1,0){0.2}}\put(8,0){\line(1,0){3}}
\put(2,1){\line(1,0){5}}\put(7,1){\line(1,0){0.2}}\put(7.4,1){\line(1,0){0.2}}\put(7.8,1){\line(1,0){0.2}}
\put(8.2,1){\line(1,0){0.2}}\put(8.6,1){\line(1,0){0.2}}\put(9,1){\line(1,0){3}}
\put(2,4){\line(1,0){5}}\put(7,4){\line(1,0){0.2}}\put(7.4,4){\line(1,0){0.2}}\put(7.8,4){\line(1,0){0.2}}
\put(8.2,4){\line(1,0){0.2}}\put(8.6,4){\line(1,0){0.2}}\put(9,4){\line(1,0){3}}
\put(1,3){\line(1,0){5}}\put(6,3){\line(1,0){0.2}}\put(6.4,3){\line(1,0){0.2}}\put(6.8,3){\line(1,0){0.2}}
\put(7.2,3){\line(1,0){0.2}}\put(7.6,3){\line(1,0){0.2}}\put(8,3){\line(1,0){3}}
\qbezier(1,0)(6,-3)(11,0) \qbezier(1,3)(6,6)(11,3)
\qbezier(2,1)(7,-2)(12,1) \qbezier(2,4)(7,7)(12,4)
\put(1,0){\oval(0.15,0.1)} \put(1,3){\oval(0.15,0.1)}
\put(2,1){\oval(0.15,0.1)} \put(2,4){\oval(0.15,0.1)}
\put(3,0){\oval(0.15,0.1)} \put(3,3){\oval(0.15,0.1)}
\put(4,1){\oval(0.15,0.1)} \put(4,4){\oval(0.15,0.1)}
\put(5,0){\oval(0.15,0.1)} \put(5,3){\oval(0.15,0.1)}
\put(6,1){\oval(0.15,0.1)} \put(6,4){\oval(0.15,0.1)}
\put(9,0){\oval(0.15,0.1)} \put(9,3){\oval(0.15,0.1)}
\put(10,1){\oval(0.15,0.1)} \put(10,4){\oval(0.15,0.1)}
\put(11,0){\oval(0.15,0.1)} \put(11,3){\oval(0.15,0.1)}
\put(12,1){\oval(0.15,0.1)} \put(12,4){\oval(0.15,0.1)}
\put(1,-0.3){$x_0^0$} \put(2,0.7){$x_1^0$} \put(2,4.3){$x_2^0$}
\put(1,3.3){$x_3^0$} \put(3,-0.3){$x_0^1$} \put(4,0.7){$x_1^1$}
\put(4,4.3){$x_2^1$} \put(3,3.3){$x_3^1$} \put(5,-0.3){$x_0^2$}
\put(6,0.7){$x_1^2$} \put(6,4.3){$x_2^2$} \put(5,3.3){$x_3^2$}
\put(9,-0.3){$x_0^{n-2}$}
\put(10,4.3){$x_2^{n-2}$}\put(10,0.7){$x_1^{n-2}$}
\put(9,3.3){$x_3^{n-2}$} \put(11,-0.3){$x_0^{n-1}$}
\put(12,4.3){$x_2^{n-1}$} \put(12,0.7){$x_1^{n-1}$}
\put(11,3.3){$x_3^{n-1}$} \put(3.8,-2.5){Fig. 1. Graph $C_4\times
C_n$.}
\end{picture}

\vspace{2cm}
\section{Preliminaries}

Let $m$ be a positive integer. Denote
$\alpha(m)=\frac{m+2+\sqrt{m^2+4m}}{2}$,\,
$\beta(m)=\frac{m+2-\sqrt{m^2+4m}}{2}$,
$u_p(m)=\frac{1}{\alpha(m)-\beta(m)}\left(\alpha^p(m)-\beta^p(m)\right)$,
$v_p(m)=\alpha^p(m)+\beta^p(m)$, for $p\in R$.

By the following proposition 2.1, it is easy to see that for every
integer $p\ge 0$, $u_p(m)$ and $v_p(m)$ are integral. The
propositions 2.1 and 2.2 can be easily proved by induction.

\begin{prop}
 If $p$ is integral, then
$$\left\{\begin{array}{ll}
u_p(m)=(m+2)u_{p-1}(m)-u_{p-2}(m),\\
v_p(m)=(m+2)v_{p-1}(m)-v_{p-2}(m),\\
\end{array}\right.\eqno(2.1)$$
with initial values
$$\left\{\begin{array}{cc}
u_0(m)=0,& u_1(m)=1,\\
v_0(m)=2,& \ \ v_1(m)=m+2.\\
\end{array}\right.\eqno(2.2)$$
And if $q\geq 0$ is another integer, then
$u_{pq}(m)=v_{p(q-1)}(m)u_p(m)+u_{p(q-2)}(m)$.
\end{prop}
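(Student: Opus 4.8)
The plan is to exploit the fact that $\alpha(m)$ and $\beta(m)$ are not arbitrary but are exactly the two roots of the quadratic $x^2-(m+2)x+1=0$. First I would record the two symmetric functions directly from the definitions: one gets $\alpha(m)+\beta(m)=m+2$ and, since $(m+2)^2-(m^2+4m)=4$, also $\alpha(m)\beta(m)=1$. Hence each of $\alpha(m),\beta(m)$ satisfies $x^2=(m+2)x-1$, while the difference $\alpha(m)-\beta(m)=\sqrt{m^2+4m}$ is nonzero for $m\ge 1$, so the closed forms $u_p(m)=\frac{\alpha^p-\beta^p}{\alpha-\beta}$ and $v_p(m)=\alpha^p+\beta^p$ are well defined and the whole statement becomes a matter of algebra with these two roots.

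For the recurrences (2.1), I would invoke the standard principle that any sequence $s_p=A\alpha^p+B\beta^p$ built from the roots of $x^2-(m+2)x+1=0$ automatically satisfies $s_p=(m+2)s_{p-1}-s_{p-2}$: multiplying $\alpha^2=(m+2)\alpha-1$ by $\alpha^{p-2}$ and $\beta^2=(m+2)\beta-1$ by $\beta^{p-2}$ and taking the appropriate linear combinations yields (2.1) simultaneously for $u_p(m)$ and $v_p(m)$. The initial values (2.2) then drop out by substituting $p=0,1$ into the closed forms, giving $u_0=0$, $u_1=\frac{\alpha-\beta}{\alpha-\beta}=1$, $v_0=2$, and $v_1=\alpha+\beta=m+2$. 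Equivalently, one could take (2.1)--(2.2) as the definition and verify the closed forms (and thereby integrality for $p\ge 0$) by a short induction on $p$, which is the route hinted at in the text.

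For the final identity $u_{pq}(m)=v_{p(q-1)}(m)u_p(m)+u_{p(q-2)}(m)$, I would substitute the closed forms rather than induct. Expanding $v_{p(q-1)}u_p=\frac{1}{\alpha-\beta}\bigl(\alpha^{p(q-1)}+\beta^{p(q-1)}\bigr)\bigl(\alpha^p-\beta^p\bigr)$ produces four terms: the two diagonal terms reassemble into $\frac{1}{\alpha-\beta}\bigl(\alpha^{pq}-\beta^{pq}\bigr)=u_{pq}$, while the two cross terms carry a common factor $(\alpha\beta)^p$. This is exactly the place where the relation $\alpha\beta=1$ is needed: it collapses $(\alpha\beta)^p$ to $1$ and leaves precisely $-\frac{1}{\alpha-\beta}\bigl(\alpha^{p(q-2)}-\beta^{p(q-2)}\bigr)=-u_{p(q-2)}$, so that $v_{p(q-1)}u_p=u_{pq}-u_{p(q-2)}$, which rearranges to the claim.

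None of these steps presents a serious obstacle; the one point demanding care is simplifying the cross terms with $\alpha\beta=1$ rather than with $\alpha+\beta=m+2$, since it is that single relation that turns the raw four-term expansion into the clean three-term identity. If one instead follows the inductive route suggested in the paper, the only mild subtlety is selecting the correct induction variable---$p$ for (2.1)--(2.2) and $q$ for the last identity---and treating the base cases $q=0,1$ separately.
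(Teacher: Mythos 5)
Your proposal is correct, and it is worth noting that the paper itself offers no proof at all for Proposition 2.1 --- it merely asserts that Propositions 2.1 and 2.2 ``can be easily proved by induction.'' Your route is therefore genuinely different from the one the authors gesture at: instead of inducting, you observe that $\alpha(m)$ and $\beta(m)$ are the roots of $x^2-(m+2)x+1=0$, so that $\alpha+\beta=m+2$ and $\alpha\beta=1$, and then everything follows by direct algebra with the closed forms. The key computation for the last identity checks out: expanding $v_{p(q-1)}(m)u_p(m)=\frac{1}{\alpha-\beta}\bigl(\alpha^{p(q-1)}+\beta^{p(q-1)}\bigr)\bigl(\alpha^{p}-\beta^{p}\bigr)$ gives $u_{pq}(m)$ from the diagonal terms and $-(\alpha\beta)^{p}u_{p(q-2)}(m)=-u_{p(q-2)}(m)$ from the cross terms, which is exactly the claim. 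Your approach buys two things over the inductive one: it handles all integral $p$ (including negative $p$, which the statement explicitly allows and for which a naive forward induction would need a separate backward step) in a single uniform argument, and it makes transparent why the ``plus'' sign appears in $u_{pq}=v_{p(q-1)}u_p+u_{p(q-2)}$ --- it is forced by $\alpha\beta=1$ together with $u_{-r}=-u_r$. The inductive route the paper hints at is shorter to state but would require choosing the right induction variable ($q$, with base cases $q=0,1$ interpreted via negative indices) and is no more elementary. There is no gap in your argument.
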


\begin{prop}
If $p$ is a nonnegative integer, then\\
$\bullet\quad u_p(m)\equiv p\ (\mod\ m),\  \ v_p(m)\equiv 2\ (\mod\ m);\hfill (2.3)$\\
$\bullet\quad v_{2p}(m)=m(m+4)u^2_{p}(m)+2;\hfill (2.4)$\\
$\bullet\quad u_{pq}(m)=\left\{\begin{array}{ll}
V_q(m)u_p(m), &\mbox{if}\quad q \mbox{ is even,}\\
V'_q(m)u_p(m),& \mbox{if}\quad q \mbox{ is odd,}
\end{array}
\right.\hfill (2.5)$\\
where $$V_q(m)=\sum\limits_{0<2i\leq q}v_{p(q+1-2i)}(m),\quad
V'_q(m)=\left(\sum\limits_{0<2i\leq q+1}v_{p(q+1-2i)}(m)\right)-1.\eqno(2.6)$$
\end{prop}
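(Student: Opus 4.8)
The plan is to treat the three assertions (2.3), (2.4), (2.5) in turn, using the recurrences (2.1)--(2.2) together with the closed forms $u_p=(\alpha^p-\beta^p)/(\alpha-\beta)$ and $v_p=\alpha^p+\beta^p$, where I write $\alpha=\alpha(m)$ and $\beta=\beta(m)$. Two facts about the roots are used throughout: $\alpha+\beta=m+2$ and $\alpha\beta=1$, so that $(\alpha-\beta)^2=(\alpha+\beta)^2-4\alpha\beta=(m+2)^2-4=m(m+4)$. For (2.3) I would induct on $p$: reducing (2.1) modulo $m$ and using $m+2\equiv 2$ gives $u_p\equiv 2u_{p-1}-u_{p-2}$ and $v_p\equiv 2v_{p-1}-v_{p-2}\pmod m$, while the initial values $u_0=0,\ u_1=1$ and $v_0=2,\ v_1=m+2\equiv 2$ start the induction. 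The inductive step is immediate: if $u_{p-1}\equiv p-1$ and $u_{p-2}\equiv p-2$, then $u_p\equiv 2(p-1)-(p-2)=p$, and likewise $v_p\equiv 2\cdot 2-2=2\pmod m$.

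For (2.4) I would compute directly from the closed forms rather than induct. Since $(\alpha\beta)^p=1$,
$$(\alpha-\beta)^2u_p^2=(\alpha^p-\beta^p)^2=\alpha^{2p}+\beta^{2p}-2(\alpha\beta)^p=v_{2p}-2,$$
and substituting $(\alpha-\beta)^2=m(m+4)$ gives $v_{2p}=m(m+4)u_p^2+2$ at once.

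For (2.5) the cleanest route is again the closed form. Set $a=\alpha^p$ and $b=\beta^p$, so $ab=1$ and $b=a^{-1}$. The geometric-series identity yields
$$\frac{u_{pq}}{u_p}=\frac{a^q-b^q}{a-b}=\sum_{k=0}^{q-1}a^{q-1-k}b^{k}=\sum_{k=0}^{q-1}a^{q-1-2k}.$$
The exponents $q-1-2k$ run symmetrically from $q-1$ down to $-(q-1)$ in steps of $2$, so I would pair the exponent $j$ with $-j$ and use $a^{j}+a^{-j}=\alpha^{pj}+\beta^{pj}=v_{pj}$. When $q$ is even all these exponents are odd and there is no central term, so the sum collapses to $\sum_{j\in\{1,3,\dots,q-1\}}v_{pj}=V_q$; when $q$ is odd all exponents are even, the central term contributes $a^0=1$, and collecting the remaining pairs together with $v_0=2$ reproduces exactly $V'_q$. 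Multiplying by $u_p$ then gives (2.5). As the induction hinted at in the text, one may instead induct on $q$ in steps of two using $u_{pq}=v_{p(q-1)}u_p+u_{p(q-2)}$ from Proposition 2.1, with base cases $q=0$ (where $V_0$ is the empty sum, so $u_0=0=V_0u_p$) and $q=1$ (where $V'_1=v_0-1=1$, so $u_p=V'_1u_p$); the inductive step then reduces to the telescoping identities $V_q=v_{p(q-1)}+V_{q-2}$ and $V'_q=v_{p(q-1)}+V'_{q-2}$, which are read off directly from (2.6).

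I expect the only real difficulty to be the index bookkeeping in (2.5): one must match the range of the summation index $i$ in (2.6) against the symmetric set of exponents $\{q-1,q-3,\dots,-(q-1)\}$, track the parity of $q$, and treat separately the central exponent $0$, whose term $a^0=1$ --- together with the value $v_0=2$ --- accounts for the $-1$ in the definition of $V'_q$. The verifications in (2.3) and (2.4) are routine by comparison.
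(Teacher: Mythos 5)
Your proof is correct. The paper itself offers no argument for Proposition 2.2 beyond the remark that it ``can be easily proved by induction,'' so there is nothing detailed to compare against; your write-up simply supplies the missing verification. Your treatment of (2.3) is exactly the induction the paper has in mind. For (2.4) and (2.5) you work directly from the closed forms together with $\alpha\beta=1$ and $(\alpha-\beta)^2=m(m+4)$, which is arguably cleaner than induction: (2.4) drops out in one line, and for (2.5) the geometric-sum expansion $u_{pq}/u_p=\sum_{k=0}^{q-1}\alpha^{p(q-1-2k)}$ with symmetric pairing of exponents explains transparently where the two parity cases and the $-1$ in $V_q'$ come from (the central exponent $0$ contributes $1=v_0-1$). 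Your index bookkeeping checks out: for $q$ even the paired exponents give exactly $\sum_{j\in\{1,3,\dots,q-1\}}v_{pj}=V_q$, and for $q$ odd the sum in (2.6) runs through $j=0$, whose term $v_0=2$ minus the subtracted $1$ matches the unpaired central term. The alternative induction on $q$ in steps of two via $u_{pq}=v_{p(q-1)}u_p+u_{p(q-2)}$, which you also sketch, is the route the authors presumably intended, and your telescoping identities $V_q=v_{p(q-1)}+V_{q-2}$ and $V_q'=v_{p(q-1)}+V_{q-2}'$ with the stated base cases close that version as well.
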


If $n$ is a positive integer of the form $p_1^{t_1}\cdots p_k^{t_k}$
where the $p_i'$s are distinct primes, then let $T_{p_i}(n)$  denote
$t_i$. Let $e_n=u_n(2)$,\, $f_n=u_n(4)$.

\begin{prop}
Let $T_2(n)=t_2,\, T_3(n)=t_3$, for
$n\geq 2$. Then we have  $T_2(e_n)=\left\{\begin{array}{cl}
  0,     & \mbox{if}\quad t_2=0,\\
  t_2+1, & \mbox{if}\quad t_2>0;\end{array}\right.$
$T_2(f_n)=t_2;\,  T_3(e_n)=t_3$; and
$T_3(f_n)=\left\{\begin{array}{cl}
  0,   & \mbox{if}\quad t_2=0, \\
  t_3+1, & \mbox{if}\quad t_2>0.
\end{array}
\right.$
\end{prop}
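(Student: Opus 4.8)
The plan is to reduce all four assertions to the behaviour of $u_p(m)$ under doubling and tripling of the index, both of which follow from Propositions 2.1 and 2.2. Taking $q=2$ in (2.5)--(2.6) gives $V_2(m)=v_p(m)$, hence the \emph{doubling} identity $u_{2p}(m)=v_p(m)u_p(m)$; iterating it yields, for $a\ge 1$,
$$u_{2^{a}p}(m)=u_p(m)\prod_{j=0}^{a-1}v_{2^{j}p}(m).\eqno(\ast)$$
Taking $q=3$ gives $V'_3(m)=v_{2p}(m)+1$, and then (2.4) turns this into the \emph{tripling} identity
$$u_{3p}(m)=\bigl(m(m+4)u_p^2(m)+3\bigr)u_p(m).\eqno(\ast\ast)$$
With these I can strip the index $n$ of its factors of $2$ and of $3$ one at a time, reducing each valuation to a small base case.

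For the $2$-adic statements I apply $(\ast)$ with $n=2^{t_2}s$, $s$ odd. For $e_n$ (case $m=2$): $u_s(2)\equiv s\pmod 2$ is odd by (2.3); each even-index factor satisfies $v_{2p}(2)=2\bigl(6u_p^2(2)+1\bigr)$ by (2.4), so has $T_2=1$; and a one-line induction on (2.1) shows $v_p(2)\equiv 2,4,6,4\pmod 8$ according as $p\equiv 0,1,2,3\pmod 4$, so the lone odd-index factor $v_s(2)$ has $T_2=2$. Hence $T_2(e_n)=0$ if $t_2=0$ and $T_2(e_n)=2+(t_2-1)=t_2+1$ if $t_2>0$. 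For $f_n$ (case $m=4$): (2.3) gives $v_p(4)\equiv 2\pmod 4$ for \emph{every} $p$, so every factor of $(\ast)$ has $T_2=1$ while $u_s(4)$ is odd, giving $T_2(f_n)=t_2$.

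For the $3$-adic statements I apply $(\ast\ast)$. With $m=2$ it reads $u_{3p}(2)=3\bigl(4u_p^2(2)+1\bigr)u_p(2)$, and the middle factor is never divisible by $3$ (it is $\equiv u_p^2+1\equiv 1$ or $2\pmod 3$), so $T_3(u_{3p}(2))=1+T_3(u_p(2))$ for all $p$; writing $n=3^{t_3}r$ with $3\nmid r$ and peeling off the $t_3$ threes leaves $T_3(e_r)$, which is $0$ since (2.1) modulo $3$ is periodic with $3\mid e_p\iff 3\mid p$. Thus $T_3(e_n)=t_3$. With $m=4$, $(\ast\ast)$ reads $u_{3p}(4)=\bigl(32u_p^2(4)+3\bigr)u_p(4)$; here $3\mid f_p\iff p$ is even, and for even $p$ I write $u_p(4)=3^{a}w$ ($a\ge1$, $3\nmid w$) to get $32u_p^2+3=3\bigl(32\cdot3^{2a-1}w^2+1\bigr)$ with unit cofactor mod $3$, so $T_3(f_{3p})=1+T_3(f_p)$ for even $p$. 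For odd $n$ this gives $T_3(f_n)=0$ at once; for even $n=3^{t_3}p_0$ with $p_0$ even and $3\nmid p_0$, peeling off the $t_3$ threes (the index staying even throughout) leaves $T_3(f_n)=t_3+T_3(f_{p_0})$.

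The hard part is pinning down the two base contributions exactly, since each is responsible for a ``$+1$''. The first is that $v_s(2)\equiv 4\pmod 8$ (not merely $\equiv 0\pmod 4$) for odd $s$, which is precisely the mod-$8$ induction above. The second, and the most delicate point, is the claim $T_3(f_{p_0})=1$ whenever $p_0$ is even and $3\nmid p_0$. I would settle this by computing $f_p\bmod 9$: by (2.1) this sequence is purely periodic (of period $12$), and within one period every even $p$ with $3\nmid p$ gives $f_p\equiv 3$ or $6\pmod 9$, i.e. $T_3(f_{p_0})=1$. Equivalently, via $(\ast)$ one reduces $T_3(f_{p_0})$ to $T_3(v_r(4))$ for odd $r$ with $3\nmid r$, using that $3\mid v_p(4)\iff p$ is odd, and the same mod-$9$ computation gives $v_r(4)\equiv 3$ or $6\pmod 9$. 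Combining this with the preceding paragraph yields $T_3(f_n)=t_3+1$ for even $n$, completing all four formulas.
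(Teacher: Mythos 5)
Your proposal is correct: I checked the two engine identities ($u_{2p}=v_pu_p$ from (2.5) with $q=2$, and $u_{3p}=(m(m+4)u_p^2+3)u_p$ from (2.5), (2.6) and (2.4) with $q=3$), the mod-$8$ periodicity of $v_p(2)$ (period $4$, values $2,4,6,4$, so $v_s(2)\equiv 4\pmod 8$ for odd $s$), and the mod-$9$ periodicity of $f_p$ (period $12$, with $f_{p_0}\equiv 3$ or $6\pmod 9$ exactly at the even $p_0$ coprime to $3$), and all four valuations come out as claimed. Your route is built on the same identities as the paper's but runs the reduction in the opposite order. The paper first strips the part of the index coprime to the relevant prime, using (2.5)--(2.6) to show the cofactor $V_q$ or $V'_q$ contributes no factor of that prime, and then inducts on the pure prime-power indices $2^{t_2}$, $3^{t_3}$, $2\cdot 3^{t_3}$; its only base cases are single numbers ($e_2=4$, $f_2=6$, $f_6=6930$), which is where each ``$+1$'' is absorbed. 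You instead peel the prime powers off $n$ one at a time by iterating the doubling/tripling identities, which leaves a residual index that is coprime to the prime but otherwise arbitrary; that is why you need the two finite periodicity computations (mod $8$ for $v_s(2)$, mod $9$ for $f_{p_0}$) that the paper avoids, while in exchange you dispense with the $V_q/V'_q$ bookkeeping of (2.6) almost entirely and make the exact source of each ``$+1$'' visible in a single explicit congruence. Both arguments are complete; yours is arguably easier to audit at the two delicate points you correctly identified as carrying the whole burden.
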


\begin{proof}
Let $n=2^{t_2}q$, where $q$ is odd.\\
\indent By (2.5), $e_n=V'_q(2)e_{2^{t_2}}$ and  $f_n=V'_q(4)f_{2^{t_2}}$.
By (2.3),  $v_p(2)$ and $v_p(4)$ are even for every $p$ and then from (2.6) we have that $V'_q(2)$ and $V'_q(4)$ are odd.
Thus $T_2(e_n)=T_2(e_{2^{t_2}})$ and $T_2(f_n)=T_2(f_{2^{t_2}})$. If $t_2=0$, then $T_2(e_{2^{t_2}})=T_2(e_1)=0$ and
$T_2(f_{2^{t_2}})=T_2(f_1)=0$. Now we prove by induction on $t_2>0$ that
$T_2(e_{2^{t_2}})=t_2+1$  and $T_2(f_{2^{t_2}})=t_2$.
This is valid if $t_2=1$.  Since from (2.4), (2.5) and (2.6) it follows that
$e_{2^{t_2}}=v_{2^{t_2-1}}(2)e_{2^{t_2-1}}=(12e_{2^{t_2-2}}^2+2)e_{2^{t_2-1}}$ and
$f_{2^{t_2}}=v_{2^{t_2-1}}(4)f_{2^{t_2-1}}=(32f_{2^{t_2-2}}^2+2)f_{2^{t_2-1}}$, then
by the induction hypothesis we have that
$T_2(e_{2^{t_2}})=T_2(12e_{2^{t_2-2}}^2+2)+T_2(e_{2^{t_2-1}})=1+t_2$ and
$T_2(f_{2^{t_2}})=T_2(32f_{2^{t_2-2}}^2+2)+T_2(f_{2^{t_2-1}})=1+t_2-1=t_2$. Thus
$T_2(e_n)=t_2+1$ and $T_2(f_n)=t_2$.\\
\indent Let $n=3^{t_3}\gamma$, where $3\nmid \gamma$.\\
\indent By (2.5), $e_n=\left\{
\begin{array}{ll}
V'_{\gamma}(2)e_{{3^{t_3}}}, & \mbox{if}\; 2\nmid\gamma,\\
V_{\gamma}(2)e_{{3^{t_3}}},  & \mbox{if}\; 2\mid\gamma.
\end{array}\right.$ Note that
$v_{n}(2)=4v_{n-1}(2)-v_{n-2}(2)\equiv v_{n-1}(2)-v_{n-2}(2)\equiv -v_{n-3}\, (\mod 3)$,\,
$v_0(2)=2$,\, $v_1(2)=4\equiv1 (\mod 3)$, $v_2(2)=14\equiv2(\mod 3)$,\, $v_3(2)=52\equiv1(\mod3)$,\,
$v_4(2)=194\equiv2(\mod3)$,\, $v_5(2)=724\equiv1$\, $(\mod3)$.
Then it is not difficult to see that if $2\nmid \gamma$ then
$v_{3^{t_3}(\gamma+1-2i)}(2)\equiv 2\ (\mod 3)$; if $2\mid \gamma$ then $v_{3^{t_3}(\gamma+1-2i)}(2)\equiv 1\ (\mod 3)$.
Hence, if $2\nmid\gamma$, then $V'_{\gamma}(2)\equiv 2\times \frac{\gamma+1}{2}-1=\gamma\ (\mod 3)$;
if $2\mid\gamma$, then $V_{\gamma}(2)\equiv \frac{\gamma}{2} (\mod 3)$. It follows that neither
$V_{\gamma}(2)$ ($\gamma$ is even) nor $V_{\gamma}'(2)$ ($\gamma$ is odd) contains the divisor 3, and hence $T_3(e_n)=T_3(e_{3^{t_3}})$.
Now we prove by induction on $t_3$ that $T_3(e_{3^{t_3}})=t_3$.
It is valid if $t_3=0$, or 1. Since from (2.4) and (2.5) we have that
$e_{3^{t_3}}=(v_{3^{t_3-1}\cdot 2}(2)+v_0(2)-1)e_{3^{t_3-1}}=(12e^2_{3^{t_3-1}}+3)e_{3^{t_3-1}}$. So
by the induction hypothesis we have
$T_3(e_{3^{t_3}})=T_3(12e^2_{3^{t_3-1}}+3)+T_3(e_{3^{t_3-1}})=1+t_3-1=t_3$. Thus
$T_3(e_n)=t_3$.\\
\indent If $t_2=0$, namely $n$ is odd, then we have
$f_n=6f_{n-1}-f_{n-2}\equiv -f_{n-2}\equiv\cdots \equiv (-1)^{\frac{n-1}{2}}f_1\,(\mod 3)$.
Note that $f_1=1$, so $T_3(f_n)=0$.\\
\indent If $t_2>0$, namely $n$ is even, then we can write $n=3^{t_3}\cdot2\epsilon$, where $3\nmid \epsilon$.
By (2.5), $f_n=\left\{
\begin{array}{ll}
V'_{\epsilon}(4)f_{{2\cdot3^{t_3}}}, & \mbox{if}\, 2\nmid\epsilon,\\
V_{\epsilon}(4)f_{{2\cdot3^{t_3}}},  & \mbox{if}\, 2\mid\epsilon.
\end{array}\right.$
By (2.1), $v_{n}(4)=6v_{n-1}(4)-v_{n-2}(4)\equiv -v_{n-2}(4)\equiv\cdots\equiv(-1)^{\frac{n}{2}}v_0(4)=(-1)^\frac{n}{2}2(\mod 3)$.
Then from (2.6) we have that if
$2\nmid\epsilon$,$V'_{\epsilon}(4)\equiv2\times \frac{\epsilon+1}{2}-1=\epsilon\, (\mod 3)$;
if $2\mid\epsilon$, then $V_{\epsilon}(4)\equiv(-2)\times \frac{\epsilon}{2}=-\epsilon\, (\mod 3)$.
Thus neither $V'_{\epsilon}(4)$ ($\epsilon$ is odd) nor $V_{\epsilon}(4)$ ($\epsilon$ is even) is divisible by 3.
So $T_3(f_n)=T_3(f_{2\cdot3^{t_3}})$.
Now we prove by induction on $t_3$ that $T_3(f_{2\cdot3^{t_3}})=t_3+1$.
If  $t_3=0$ or 1, we have $f_2=6$ and $f_6=6930$ respectively, so it is valid.
Since from (2.4), (2.5) and (2.6) it follows that
$f_{2\cdot3^{t_3}}=(v_{2\cdot3^{t_3-1}\cdot 2}(4)+v_0(4)-1)f_{2\cdot3^{t_3-1}}=
(32f^2_{2\cdot3^{t_3-1}}+3)f_{2\cdot3^{t_3-1}}$,
then by the induction hypothesis we have
$T_3(f_{2\cdot3^{t_3}})=T_3(32f^2_{2\cdot3^{t_3-1}}+3)+T_3(f_{2\cdot3^{t_3-1}})=1+t_3$.
\end{proof}

\section{System of relations for the cokernel of the Laplacian on $C_4\times C_n$}

Now we work on the system of relations of the cokernel of
the Laplacian of $C_4\times C_n$. Let
$x_j^i=(0,\cdots,0,1,0,\cdots,0)\in Z^{4n}$, whose unique nonzero 1
is in the position corresponding to vertex $v_j^i$. It follows from
the relations of coker$L(C_4\times C_n)$ that we can get the system
of equations:
$$
4x_j^i-(x_{j+1}^i+x_{j-1}^i)-x_j^{i+1}-x_j^{i-1}=0,\ \ j\in Z_4,\
i\in Z_n.\eqno(3.1)
$$

\begin{lemma}
There are three sequences of integral numbers $(a_i)_{i\ge 0}$,
$(b_i)_{i\ge 0}$, $(c_i)_{i\ge 0}$ such that
$$x_j^i=a_ix_j^1+b_i(x_{j+1}^1+x_{j-1}^1)+c_ix_{j+2}^1-a_{i-1}x_j^0-b_{i-1}(x_{j+1}^0+x_{j-1}^0)-c_{i-1}x_{j+2}^0,\eqno(3.2)$$
where $j\in Z_4,\ 1\leq i\leq n$. Moreover, the numbers in the above
sequences have recurrence relations and initial conditions as
follows
$$\left\{
\begin{array}{ll}
a_i=\frac{1}{4}(i+u_i(4)+2u_i(2)),\quad\quad\quad\quad\quad\quad\;(i\ge 0),\\
b_i=\frac{1}{4}(i-u_i(4)),\quad\quad\quad\quad\quad\quad\quad\quad\quad\quad\;(i\ge 0),\\
c_i=\frac{1}{4}(i+u_i(4)-2u_i(2)),\;\quad\quad\quad\quad\quad\quad\;(i\ge 0).\\
\end{array}\right.\eqno(3.3)
$$
\end{lemma}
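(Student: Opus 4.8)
The plan is to read (3.1) as a second-order linear recurrence that propagates data from one layer to the next, and then solve it. Let $L_j(y)=4y_j-(y_{j+1}+y_{j-1})$ denote the within-layer $C_4$-Laplacian acting on a vector $y=(y_0,\dots,y_3)$ indexed by $Z_4$; then (3.1) is exactly $x_j^{i+1}=L_j(x^i)-x_j^{i-1}$, so each layer is determined by the two preceding ones. This makes it natural to expect $x_j^i$ to be a fixed $Z_4$-symmetric combination of layers $0$ and $1$, which is the ansatz (3.2). Writing its ``layer-$1$ part'' as $P_j^i(y):=a_iy_j+b_i(y_{j+1}+y_{j-1})+c_iy_{j+2}$, the right-hand side of (3.2) becomes $P_j^i(x^1)-P_j^{i-1}(x^0)$. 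Since (3.1) is linear and acts identically on the two parts, it suffices to force $P^i$ to obey the same layer recurrence $P_j^{i+1}(y)=4P_j^i(y)-P_{j+1}^i(y)-P_{j-1}^i(y)-P_j^{i-1}(y)$ for every fixed $y$.

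The heart of the computation is to expand $4P_j^i-P_{j+1}^i-P_{j-1}^i$ and reduce every index modulo $4$. Here the small-cycle arithmetic is essential: using $y_{j+3}=y_{j-1}$ and $y_{j-2}=y_{j+2}$, the three ``types'' $y_j$, $y_{j+1}+y_{j-1}$, and $y_{j+2}$ close up among themselves, so that the operator preserves the three-dimensional symmetric space they span. Matching the coefficients of these three types yields the coupled system $a_{i+1}=4a_i-2b_i-a_{i-1}$, $b_{i+1}=-a_i+4b_i-c_i-b_{i-1}$, $c_{i+1}=-2b_i+4c_i-c_{i-1}$, with initial data $(a_0,b_0,c_0)=(0,0,0)$ and $(a_1,b_1,c_1)=(1,0,0)$ forced by (3.2) at $i=0,1$. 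A short induction on $i$ (checking $i=1,2$ directly against (3.1), then using that both $x^i$ and the ansatz satisfy this recurrence in $i$) proves (3.2). The only thing that can go wrong is a bookkeeping slip in the mod-$4$ reduction, and that is the main thing to get right.

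It then remains to solve the coupled system in closed form. I would diagonalise it by forming the three combinations $s_i:=a_i+2b_i+c_i$, $d_i:=a_i-c_i$, and $w_i:=a_i-2b_i+c_i$. Direct substitution into the coupled recurrences shows they decouple into the single two-term recurrences $s_{i+1}=2s_i-s_{i-1}$, $d_{i+1}=4d_i-d_{i-1}$, and $w_{i+1}=6w_i-w_{i-1}$, each with initial values $0,1$. These are precisely the recurrences of Proposition 2.1 for the parameters $m=0,2,4$, so $s_i=u_i(0)=i$, $d_i=u_i(2)=e_i$, and $w_i=u_i(4)=f_i$. Inverting this linear change of variables gives $b_i=\frac14(i-u_i(4))$ and $a_i,c_i=\frac14\big((i+u_i(4))\pm 2u_i(2)\big)$, which is exactly (3.3).

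Finally I would confirm that (3.3) really produces integers, despite the apparent factor $\frac14$. This is where Proposition 2.2 enters: from $u_p(m)\equiv p\pmod m$ we have $u_i(4)\equiv i\pmod 4$ and $u_i(2)\equiv i\pmod 2$, whence $i+u_i(4)+2u_i(2)\equiv 4i\equiv 0$, $i-u_i(4)\equiv 0$, and $i+u_i(4)-2u_i(2)\equiv 0$, all modulo $4$; thus $a_i,b_i,c_i\in Z$, completing the proof. The genuine obstacle is the first step, namely recognising that the $C_4$-layer operator preserves the three-dimensional symmetric space and extracting the correct coupled recurrence; once that is in hand, the diagonalisation and the integrality check are routine.
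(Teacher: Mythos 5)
Your proposal is correct and follows essentially the same route as the paper: an induction on the layer index using the mod-$4$ closure of the three symmetric types $x_j,\;x_{j+1}+x_{j-1},\;x_{j+2}$ to obtain exactly the coupled system (3.5), followed by a linear change of variables and an appeal to Proposition 2.1. The only (minor, and arguably cleaner) difference is in solving (3.5): you diagonalise fully via $a_i+2b_i+c_i$, $a_i-c_i$, $a_i-2b_i+c_i$, obtaining three decoupled second-order recurrences with parameters $m=0,2,4$, whereas the paper sets $\tau_i=a_i+c_i$, $\eta_i=a_i-c_i$, eliminates $b_i$ to get a fourth-order recurrence for $\tau_i$, and then substitutes $\phi_i=2\tau_i-i$ to reach $u_i(4)$.
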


\begin{proof}
From (3.1), it follows that
$$x_j^{i+1}=4x_j^i-(x_{j+1}^i+x_{j-1}^i)-x_j^{i-1},\; \mbox{for}\, j\in Z_4,\, 2\leq i\leq n-1.   \eqno(3.4) $$
This lemma is valid for the cases of $i=1,\,2$. Suppose that
$x_j^l=a_lx_j^1+b_l(x_{j+1}^1+x_{j-1}^1)+c_lx_{j+2}^1-a_{l-1}x_j^0-b_{l-1}(x_{j+1}^0+x_{j-1}^0)-c_{l-1}x_{j+2}^0$,
for $l\leq h$. Then from the induction assumption and  equation
(3.4), it follows that
$x_j^{h+1}=4x_j^h-(x_{j+1}^h+x_{j-1}^h)-x_j^{h-1}$
$=4\Big{(}a_hx_j^1+b_h(x_{j+1}^1+x_{j-1}^1)+c_hx_{j+2}^1
-a_{h-1}x_j^0-b_{h-1}(x_{j+1}^0+x_{j-1}^0)-c_{h-1}x_{j+2}^0\Big{)}
-\Big{(}a_hx_{j+1}^1+b_h(x_{j+2}^1+x_{j}^1)
+c_hx_{j-1}^1-a_{h-1}x_{j+1}^0-b_{h-1}(x_{j+2}^0+x_{j}^0)-c_{h-1}x_{j-1}^0\Big{)}
-\Big{(}a_hx_{j-1}^1+b_h(x_{j}^1+x_{j-2}^1)+c_hx_{j+1}^1-a_{h-1}x_{j-1}^0-b_{h-1}(x_{j}^0+x_{j-2}^0)-
          c_{h-1}x_{j+1}^0\Big{)}-\Big{(}a_{h-1}x_j^1+b_{h-1}(x_{j+1}^1+x_{j-1}^1)
          +c_{h-1}x_{j+2}^1-a_{h-2}x_j^0-b_{h-2}(x_{j+1}^0+x_{j-1}^0)-c_{h-2}x_{j+2}^0\Big{)}$
$=\Big{(}4a_h-2b_h-a_{h-1}\Big{)}x_j^1+\Big{(}4b_h-a_h-c_h-b_{h-1}\Big{)}\Big{(}x_{j+1}^1+x_{j-1}^1\Big{)}
+\Big{(}4c_h-2b_h-c_{h-1}\Big{)}x_{j+2}^1
          -\Big{(}4a_{h-1}-2b_{h-1}-a_{h-2}\Big{)}x_j^0-\Big{(}4b_{h-1}-a_{h-1}-c_{h-1}-b_{h-2}\Big{)}
          \Big{(}x_{j+1}^0+x_{j-1}^0\Big{)}
          -\Big{(}4c_{h-1}-2b_{h-1}-c_{h-2}\Big{)}x_{j+2}^0$
$=a_{h+1}x_j^1+b_{h+1}(x_{j+1}^1+x_{j-1}^1)+c_{h+1}x_{j+2}^1-a_hx_j^0-b_h(x_{j+1}^0+x_{j-1}^0)-c_hx_{j+2}^0.$

Thus (3.2) holds by induction.\\
\indent From the process of induction just now, it is easy to see that
$$\left\{\begin{array}{ll}
a_{i+1}=4a_i-2b_i-a_{i-1},\\
b_{i+1}=4b_i-(a_i+c_i)-b_{i-1},\\
c_{i+1}=4c_i-2b_i-c_{i-1},
\end{array}\right.\eqno(3.5)$$
for $ i\geq 1$. Let $\tau_i=a_i+c_i$ and $\eta_i=a_i-c_i$. After a short
calculation, we can get
$$\left\{\begin{array}{ll}
\eta_{i+1}=4\eta_i-\eta_{i-1},\\
\eta_0=0,\quad \eta_1=1;\\
\tau_{i+2}-8\tau_{i+1}+14\tau_i-8\tau_{i-1}+\tau_{i-2}=0,\\
\tau_0=0, \quad \tau_1=1.
\end{array}\right.$$
By proposition 2.1, we have $\eta_i=u_i(2)=e_i$. Let $\phi_i=2\tau_i-i$, then one
can verify that $\phi_{i+2}=6\phi_{i+1}-\phi_i$, with $\phi_0=0$ and $\phi_1=1$.
Immediately, $\phi_i=u_i(4)=f_i$, and then
$\tau_i=\frac{1}{2}(i+u_i(4))$. Now  the equalities in (3.3) can been verified directly.
\end{proof}

We know from lemma 3.1 that the systerm of equation (3.2) has at
most 8 generators, i.e., each $x_j^i$ can be expressed in terms of
$x_0^0,x_1^0,x_2^0,x_3^0,x_0^1,x_1^1,x_2^1,\\x_3^1.$ So there are at
least $4n-8$ diagonal entries of the Smith normal form of $L(G)$ are
equal to 1, however the remaining invariant factors of
coker($C_4\times C_n$) hide inside the relations matrix induced by
$x_0^0,x_1^0,x_2^0,x_3^0,x_0^1,x_1^1,x_2^1,x_3^1$.

Let $Y=(x_0^1,\ x_1^1,\ x_2^1,\ x_3^1,\ x_0^0,\ x_1^0,\ x_2^0,\
x_3^0)^t$, $A_n=\left(\begin{array}{cccc}
a_n & b_n & c_n & b_n\\
b_n & a_n & b_n & c_n\\
c_n & b_n & a_n & b_n\\
b_n & c_n & b_n & a_n
\end{array} \right)$ and $M=\left(\begin{array}{cc}
A_{n+1} & -A_n\\
A_n & -A_{n-1}
\end{array} \right).$  From (3.2) and the cyclic structure of $C_4\times C_n$, we
have
$$\left\{\begin{array}{ll}
x_j^0=x_j^n=a_{n}x_j^1+b_{n}(x_{j+1}^1+x_{j-1}^1)+c_{n}x_{j+2}^1-a_{n-1}x_j^0\\
\hspace{1.7cm}-b_{n-1}(x_{j+1}^0+x_{j-1}^0)-c_{n-1}x_{j+2}^0,\\
x_j^1=x_j^{n+1}=a_{n+1}x_j^1+b_{n+1}(x_{j+1}^1+x_{j-1}^1)+c_{n+1}x_{j+2}^1-a_nx_j^0\\
\hspace{1.7cm}-b_n(x_{j+1}^0+x_{j-1}^0)-c_nx_{j+2}^0,
\end{array}\right.$$
where  $0\leq j\le 3$. Therefore
$$(M-I)Y=0.\eqno(3.6)$$

From the argument above, we know that one can reduce $L(G)$
to  $I_{4n-8}\oplus (M-I)$ by performing some row and column
operations up to equivalence.  Now
we only need to evaluate the SNF of $M-I$.

\section{Analysis of the coefficients of the Smith normal form of $M-I$}

If we multiply  the last 4 rows of $M-I$ by $-1$, then we
have that
$$\left(\begin{array}{cc}
A_{n+1}-I_4& -A_n\\
A_n        &-A_{n-1}-I_4
\end{array}\right)\sim\left(\begin{array}{cc}
A_{n+1}-I_4 & -A_n\\
-A_n        & A_{n-1}+I_4
\end{array}\right).\eqno (4.1)$$
\indent From lemma 3.1, one can verify that
$a_{i+1}+c_{i+1}+2b_{i+1}=a_{i}+c_{i}+2b_{i}+1$, for each $i\in N$,
 and it results that  each line sum of the  right matrix  of (4.1)
is equal to 0. Immediately, we have the following lemma.

\begin{lemma}
$M-I\sim (0)\oplus M_1$, where $M_1$ is the
submatrix of $M-I$ resulting from the deletion of the first
row and column.
\end{lemma}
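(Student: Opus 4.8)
The plan is to perform the reduction on the right-hand matrix of (4.1) rather than on $M-I$ itself, and then transfer the conclusion back. Denote by $B$ the right-hand matrix $\left(\begin{smallmatrix}A_{n+1}-I_4 & -A_n\\ -A_n & A_{n-1}+I_4\end{smallmatrix}\right)$ of (4.1); it is obtained from $M-I$ by multiplying the last four rows by $-1$, so $B\sim M-I$. The advantage of passing to $B$ is twofold: each $A_k$ is symmetric, so $B$ is symmetric, and the identity $a_{i+1}+c_{i+1}+2b_{i+1}=a_i+c_i+2b_i+1$ recorded just above makes every row sum of $B$ equal to $0$; by symmetry every column sum of $B$ then vanishes as well. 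Both facts are used below. (Note that $M-I$ itself has zero row sums but not zero column sums, which is why the symmetric representative $B$ is preferable.)

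First I would clear the first row of $B$. Adding rows $2,\dots,8$ to row $1$ replaces row $1$ by the vector of column sums of $B$, which is the zero vector, while changing no other entry. Next I would clear the first column, using the row-sum property, which still holds for rows $2,\dots,8$ because those rows were untouched: adding columns $2,\dots,8$ to column $1$ replaces the $(i,1)$ entry, for $i\ge 2$, by the $i$-th row sum, namely $0$, and leaves the $(1,1)$ entry equal to $0$. After these two operations the matrix has become $(0)\oplus B_1$, where $B_1$ is the $7\times 7$ block obtained from $B$ by deleting its first row and column. The key point is that this block is never disturbed, since at every stage we only add other rows into row $1$ and other columns into column $1$.

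It remains to identify $B_1$ with $M_1$. Because $B$ differs from $M-I$ only in the sign of its last four rows, the submatrix $B_1$ differs from $M_1$, the deletion of the first row and column of $M-I$, only in the sign of the rows inherited from the bottom block; reversing those signs is a unimodular row operation, so $B_1\sim M_1$. Chaining the equivalences yields $M-I\sim B\sim (0)\oplus B_1\sim (0)\oplus M_1$, as claimed.

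I expect the only genuine care to lie in the bookkeeping rather than in any new computation: one must check that the operations clearing the first row and column leave the complementary $7\times 7$ block exactly equal to the deletion of $B$, and must track the sign changes so that $B_1$ and $M_1$ are related by an honest unimodular transformation and hence share a Smith normal form. The arithmetic input—that all row sums vanish—is already furnished by the quoted recurrence identity, so no further manipulation of the sequences $a_i,b_i,c_i$ is required.
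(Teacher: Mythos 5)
Your proof is correct and follows essentially the same route as the paper: the paper also passes to the right-hand matrix of (4.1), observes via the identity $a_{i+1}+c_{i+1}+2b_{i+1}=a_i+c_i+2b_i+1$ that all its line sums vanish, and concludes immediately. You have merely supplied the bookkeeping (symmetry giving zero column sums, the row/column additions, and the sign-flip identification of $B_1$ with $M_1$) that the paper leaves implicit.
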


Let $h_n=e_n+e_{n+1}$, $g_n=f_n+f_{n+1}$, $p_i=e_{i}+e_{n-i}$, $q_i=f_{i}+f_{n-i}$, and let
$$L_1=\small{\begin{pmatrix}
    0 & 0 & 0 & 1 & 1 & 1 & 1 \\
    1 & 2 & 1 & -1 & -1 & -1 & -1 \\
    0 & 0 & 0 & -1 & 0 & 1 & 0 \\
    0 & 1 & 0 & 0 & 0 & 0 & 0 \\
    0 & 0 & 0 & 0 & 0 & 1 & 0 \\
    1 & 1 & 0 & 0 & 0 & 0 & 0 \\
    0 & 0 & 0 & 0 & 1 & 1 & 0 \\
  \end{pmatrix}},\; R_1=
  \small{\begin{pmatrix}
    -1 & -1 & 0 & 1 & 0 & 1 & 0 \\
    0 & 1 & 0 & 0 & 0 & 0 & 0 \\
    -1 & 0 & 0 & 0 & 0 & -1 & 0 \\
    -1 & 0 & 0 & 0 & 0 & 0 & 0 \\
    0 & 0 & 1 & 0 & -1 & 0 & -1 \\
    -1 & 0 & -1 & 0 & 0 & 0 & 0 \\
    0 & 0 & 0 & 0 & 0 & 0 & 1 \\
  \end{pmatrix}}.$$
\indent Then one can check  that $L_1$ and $R_1$ are unimodular
matrices and
$$ L_1M_1R_1=
\begin{pmatrix}
  0                      &    0                     & 0                        &   n                  &   n                  & 0 & 0\\
  0                      &   p_{-1}                    & p_0                      &   0                  &   0                  & 0 & 0\\
  0                      &   p_0                    & p_{1}                   &   0                  &   0                  & 0 & 0\\
\frac{q_{-1}+q_0}{2}  &\frac{p_{-1}+q_{-1}}{2}& \frac{p_{0}+q_{0}}{2} &\frac{n-q_{-1}}{4} & \frac{n-q_{0}}{4} &0  &0\\
\frac{q_{0}+q_1}{2}   &\frac{p_0+q_0}{2}      &\frac{p_{1}+q_{1}}{2}&\frac{n-q_{0}}{4}  & \frac{n-q_{1}}{4}&0  &0\\
0           &0    &0     &\frac{n+p_{-1}}{2}     &\frac{n+p_{0}}{2}       &p_{-1}  &p_0\\
0           &0    &0     &\frac{n+p_{0}}{2}
&\frac{n+p_{1}}{2}&p_0   &p_{1}
\end{pmatrix}.$$
Putting $m=2$ and $4$, then it follows from  proposition 2.1 that
$$\left\{\begin{array}{ll}
p_{i+1}=4p_i-p_{i-1},\\
q_{i+1}=6q_i-q_{i-1}.
\end{array}\right.\eqno(4.2)$$
\indent Let $M_2=L_1M_1R_1$ and $ U=
  \begin{pmatrix}
    1 & 0 & 0 & 0 & 0 & 0 & 0 \\
    0 & 0 & 1 & 0 & 0 & 0 & 0 \\
    0 & -1 & 4 & 0 & 0 & 0 & 0 \\
    0 & 0 & 0 & 0 & 1 & 0 & 0 \\
    -1 & 0 & -1 & -1 & 6 & 0 & 0 \\
    0 & 0 & 0 & 0 & 0 & 0 & 1 \\
    -1 & 0 & 0 & 0 & 0 & -1 & 4 \\
  \end{pmatrix}$.\\
Then by (4.2) we have
$$U^iM_2=
\small{\begin{pmatrix}
  0                      &    0                     & 0                        &   n                  &   n                  & 0 & 0\\
  0                      &   p_{i-1}                    & p_i                      &   0                  &   0                  & 0 & 0\\
  0                      &   p_i                    & p_{i-1}                   &   0                  &   0                  & 0 & 0\\
\frac{q_{i-1}+q_i}{2}  &\frac{p_{i-1}+q_{i-1}}{2}& \frac{p_{i}+q_{i}}{2} &\frac{n-q_{i-1}}{4} & \frac{n-q_{i}}{4} &0  &0\\
\frac{q_{i}+q_{i+1}}{2}   &\frac{p_i+q_i}{2}      &\frac{p_{i+1}+q_{i+1}}{2}&\frac{n-q_{i}}{4}  & \frac{n-q_{i+1}}{4}&0  &0\\
0           &0    &0     &\frac{n+p_{i-1}}{2}     &\frac{n+p_{i}}{2}       &p_{i-1}  &p_i\\
0           &0    &0     &\frac{n+p_{0}}{2}
&\frac{n+p_{i+1}}{2}  &p_i     &p_{i+1}
\end{pmatrix}}.\eqno(4.3)
$$

Now we distinguish two cases.

\noindent{\bf Case 1 $n=2s+1$  odd.}\\
\indent In this case, by (4.2) one can verify that $$\left(\begin{array}{cc}
p_s & p_{s+1}\\
p_{s+1} & p_{s+2}
\end{array}\right)=\left(\begin{array}{cc}
h_s & h_s\\
h_s & 3h_s
\end{array}\right),\quad \left(\begin{array}{cc}
q_s & q_{s+1}\\
q_{s+1} & q_{s+2}
\end{array}\right)\left(\begin{array}{cc}
g_s & g_s\\
g_s & 5g_s
\end{array}\right).\eqno(4.4) $$
Let $i=s+1$ in (4.3), then by (4.4) we have
$$U^{s+1}M_2=
\left(
  \begin{array}{ccccccc}
    0 & 0 & 0 & n & n & 0 & 0 \\
    0 & h_s & h_s & 0 & 0 & 0 & 0 \\
    0 & h_s & 3h_s & 0 & 0 & 0 & 0 \\
    g_s & \frac{g_s+h_s}{2} & \frac{g_s+h_s}{2} & \frac{n-g_s}{4} & \frac{n-g_s}{4} & 0 & 0 \\
    3g_s & \frac{g_s+h_s}{2} & \frac{5g_s+3h_s}{2} & \frac{n-g_s}{4} & \frac{n-g_s}{4} & 0 & 0 \\
    0 & 0 & 0 & \frac{n+h_s}{2} & \frac{n+h_s}{2} & h_s & h_s \\
    0 & 0 & 0 & \frac{n+h_s}{2} & \frac{n+3h_s}{2} & h_s & 3h_s \\
  \end{array}
\right).
$$
Let $$ L_2= \left(
  \begin{array}{ccccccc}
    0 & -1 & 1 &   0    & 0  & 0  & 0 \\
    0 & 0  & 0 &   0    & 0  & 1  & -1 \\
    0 & 0  & 0 &  -1    & 1  & 0  & 0 \\
    1 & 0  & 0 &   0    & 0  & 0  & 0 \\
    0 & 1  & 0 &   0    & 0  & 0  & 0 \\
    0 & 0  & 0 &   0    & 0  & 1  & 0 \\
    0 & 0  & 0 &   1    & 0  & 0  & 0 \\
  \end{array}
\right),\quad R_2= \left(
  \begin{array}{ccccccc}
    0  & 0  & 0  & 0 & 0 & 0 & 1 \\
    0  & -1 & 0  & 0 & 1 & 0 & 0 \\
    0  & 1  & 0  & 0 & 0 & 0 & 0 \\
    1  & -2 & 0  & 1 & 0 & 0 & -2 \\
    -1 & 2  & 0  & 0 & 0 & 0 & 2 \\
    0  & 1  & 1  & 0 & 0 & 1 & 1 \\
    0  & -1 & -1 & 0 & 0 & 0 & -1 \\
  \end{array}
\right).$$ It is clear that $L_2$ and $R_2$ are unimodular matrices.  By a direct calculation, we get
$$ L_2U^{s+1}M_2R_2= X\oplus Y,\eqno(4.5)$$
 where
$X=\left(
  \begin{array}{ccc}
    0 & 2h_s & 0 \\
    h_s & 0 & 2h_s \\
    g_s & h_s & 0 \\
  \end{array}
\right)$ and $Y=\left(
  \begin{array}{cccc}
    n & 0 & 0 & 0 \\
    0 & h_s & 0 & 0 \\
    \frac{n+h_s}{2} & 0 & h_s & 0 \\
    \frac{n-g_s}{4} & \frac{h_s+g_s}{2} & 0 & g_s \\
  \end{array}\right)$.

Using the standard method for calculating the determinant factors we have that
$$\mbox{SNF}(X)=\mbox{diag}\left((h_s,g_s),h_s, \frac{4h_sg_s}{(h_s,\ g_s)}\right)$$
and
$$\mbox{SNF}(Y)=\mbox{diag}\left((n,\ h_s,\ g_s),\frac{(n,\ h_s)(h_s,\ g_s)}{(n,\
h_s,\ g_s)}, \frac{h_s(nh_s,
ng_s,h_sg_s)}{(n,h_s)(h_s,g_s)},\frac{nh_sg_s}{(nh_s,\ ng_s,\
h_sg_s)}\right).$$ \indent From above, now it is easy to see that in
this case $\mbox{SNF}(M_1)=\mbox{SNF}(M_2)=$\\
$$\mbox{diag}\left((n,\, h_s,\, g_s),\, (h_s,\, g_s),\, \frac{(n,\,
h_s)(h_s,\, g_s)}{(n,\, h_s,\, g_s)},\, h_s, \frac{h_s(nh_s,\
ng_s,\, h_sg_s)}{(n,\ h_s)(h_s,\ g_s)},\ \frac{h_sg_s}{(h_s,\
g_s)},\,
 \frac{4nh_sg_s}{(nh_s,\ ng_s,\ h_sg_s)}\right).$$

\noindent{\bf Case 2 $n=2s$  even. }\\
\indent In this case, by (4.2) one can verify that
$$\left(\begin{array}{cc}
p_s & p_{s+1}\\
p_{s+1} & p_{s+2}
\end{array}\right)=\left(\begin{array}{cc}
2e_s & 4e_s\\
4e_s & 14e_s
\end{array}\right),\quad \left(\begin{array}{ll}
q_s & q_{s+1}\\
q_{s+1} & q_{s+2}
\end{array}\right)=
\left(\begin{array}{ll}
2f_s & 6f_s\\
6f_s & 34f_s
\end{array}\right).$$

Apply (4.3), we have
$$U^{s+1}M_2=\left(
  \begin{array}{ccccccc}
    0 & 0 & 0 & 2s & 2s & 0 & 0 \\
    0 & 2e_s & 4e_s & 0 & 0 & 0 & 0 \\
    0 & 4e_s & 14e_s & 0 & 0 & 0 & 0 \\
    4f_s & f_s+e_s & 3f_s+2e_s & \frac{s-f_s}{2} & \frac{s-3f_s}{2} & 0 & 0 \\
    20f_s & 3f_s+2e_s & 17f_s+7e_s & \frac{s-3f_s}{2} & \frac{s-17f_s}{2} & 0 & 0 \\
    0 & 0 & 0 & s+e_s & s+2e_s & 2e_s & 4e_s \\
    0 & 0 & 0 & s+2e_s & s+7e_s & 4e_S & 14e_s \\
  \end{array}\right).$$
Let
$$L_3=\left(
  \begin{array}{ccccccc}
    1  & 0  & 0  & 0 & 0  & 0  & 0 \\
    0  & -1 & 1  & 0 & 0  & 0  & 0 \\
    1  & 0  & 0  & 0 & 0  & -1 & -1 \\
    -1 & -4 & 1  & 7 & -1 & 0  & 0 \\
    0  & 5  & -4 & 0 & 0  & 0  & 0 \\
    0  & 0  & 0  & 0 & 0  & 1  & 0 \\
    0  & 2  & -2 & 1 & 1  & 0  & 0 \\
  \end{array}\right),\quad
R_3=\left(
  \begin{array}{ccccccc}
    0  & -2 & 0 & 1 &-2 & 0 & 0 \\
    0  & 6  & 0 & 0 & 5 & 0 & 0 \\
    0  & -1 & 0 & 0 &-1 & 0 & 0 \\
    2  & 6  & 0 &-4 & 6 & 1 & 2 \\
    -1 & -6 & 0 & 4 &-1 &-1 &-2 \\
    0  & -3 & 2 & 2 &-3 & 1 &-1 \\
    0  & 3  &-1 &-2 & 3 & 0 & 1 \\
  \end{array}\right).$$
Then we have
$$L_3U^{s+1}M_2R_3=\left(
  \begin{array}{ccccccc}
    2s                 & 0        & 0    & 0    & 0    & 0    & 0 \\
    0                  & 2e_s     & 0    & 0    & 0    & 0    & 0 \\
    3e_s               & 0        & 6e_s & 0    & 0    & 0    & 0 \\
    s-2f_s             & e_s+4f_s & 0    & 8f_s & 0    & 0    & 0 \\
    0                  & 0        & 0    & 0    & 6e_s & 0    & 0 \\
    s                  & 0        & 0    & 0    & 0    & e_s  & 0 \\
    \frac{1}{2}(f_s+s) & f_s      & 0    & 0    & 3e_s & f_s  & 2f_s \\
  \end{array}\right).\eqno (4.6)$$
\indent Let $M_3$ denote the matrix on the right side of (4.6). If we can further reduce $M_3$ to  the direct product of
some small matrices as in the above case of $n$ being odd, then the
calculation will become easier. Unfortunately, we can not achieve
it.

Let
$$
M_3'=\left(
  \begin{array}{ccccccc}
    s                 & 0        & 0    & 0    & 0    & 0    & 0 \\
    0                  & e_s     & 0    & 0    & 0    & 0    & 0 \\
    3e_s               & 0        & 3e_s & 0    & 0    & 0    & 0 \\
    s-2f_s             & e_s+4f_s & 0    & f_s & 0    & 0    & 0 \\
    0                  & 0        & 0    & 0    & 3e_s & 0    & 0 \\
    s                  & 0        & 0    & 0    & 0    & e_s  & 0 \\
    \frac{1}{2}(f_s+s) & f_s      & 0    & 0    & 3e_s & f_s  & f_s \\
  \end{array}
\right),$$
$L_4=\left(
      \begin{array}{ccccccc}
        1 & 0 & 0 & 0 & 0 & 0 & 0 \\
        0 & 0 & 0 & 0 & -1 & 0 & 1 \\
        0 & 1 & 0 & 0 & 0 & 0 & 0 \\
        0 & 0 & 1 & 0 & 0 & 0 & 0 \\
        -1 & -1& 0 & 1 & 0 & 0 & 0 \\
        -1 & 0 & 0 & 0 & 0 & 1 & 0 \\
        0 & 0 & 0 & 0 & 1 & 0 & 0 \\
      \end{array}
    \right),\;
R_4=\left(
      \begin{array}{ccccccc}
        1 & 0 & 0 & 0 & 0 & 0 & 0 \\
        0 & 0 & 1 & 0 & 0 & 0 & 0 \\
        -1 & 0 & 0 & 1 & 0 & 0 & 0 \\
        2 & 0 & -4 & 0 & 1 & 0 & 0 \\
        0 & 0 & 0 & 0 & 0 & 0 & 1 \\
        0 & 0 & 0 & 0 & 0 & 1 & 0 \\
        0 & 1 & -1 & 0 & 0 & -1 & 0 \\
      \end{array}
    \right).$\\
It is clear that $L_4$ and $R_4$ are unimodular matrices and $L_4M_3'R_4=E\oplus F$, where
$$E=\left(
    \begin{array}{cccc}
      s & 0 & 0 & 0 \\
      \frac{1}{2}(s+f_s) & f_s & 0 & 0 \\
      0 & 0 & e_s & 0 \\
      0 & 0 & 0 & 3e_s \\
    \end{array}
  \right),\quad F=\left(
    \begin{array}{ccc}
      f_s & 0 & 0 \\
      0 & e_s & 0 \\
      0 & 0 & 3e_s \\
    \end{array}
  \right).\eqno(4.7)$$

Now we can  compute the determinantal divisors of $E$ and $F$ and furthermore
obtain the  SNF of $M_3'$. Here we directly give the result and omit the details of computation.
However we must say that proposition 2.3 plays an important role in this computation.

$$\mbox{SNF}(E)=\left\{\begin{array}{ll}
\mbox{diag}\left((s,e_s,f_s), \frac{(s, e_s)(e_s, f_s)}{(s, e_s,
f_s)},
\frac{e_s(se_s, sf_s, e_sf_s)}{(s, e_s)(e_s, f_s)}, \frac{3se_sf_s}{(se_s, sf_s, e_sf_s)}\right), &\mbox{if}\, 2\nmid s,\\
\mbox{diag}\left((s, e_s, f_s), \frac{(s, e_s)(e_s, f_s)}{(s, e_s,
f_s)}, \frac{3e_s(se_s, sf_s, e_sf_s)}{(s, e_s)(e_s, f_s)},
\frac{se_sf_s}{(se_s, sf_s, e_sf_s)}\right), &\mbox{if}\, 2\mid s;
\end{array}\right.$$
and
$$\mbox{SNF}(F)=\left\{\begin{array}{ll}
\mbox{diag}\left((e_s, f_s), e_s, \frac{3e_sf_s}{(e_s, f_s)}\right), & \mbox{if}\,  2\nmid s,\\
\mbox{diag}\left((e_s, f_s), 3e_s, \frac{e_sf_s}{(e_s, f_s)}\right), & \mbox{if}\, 2\mid s.
\end{array}\right.$$

Then it is not hard to see that $\mbox{SNF}(M_3')=$
$$\left\{\begin{array}{ll}
\mbox{diag}\left((s, e_s, f_s), (e_s, f_s), \frac{(s, e_s)(e_s,
f_s)}{(s, e_s, f_s)}, e_s,
\frac{e_s(se_s, ef_s, e_sf_s)}{(s, e_s)(e_s, f_s)}, \frac{3e_sf_s}{(e_s,\ f_s)}, \frac{3se_sf_s}{(se_s, ef_s, e_sf_s)}\right), & \mbox{if}\, 2\nmid s,\\
\mbox{diag}\left((s,e_s,f_s), (e_s, f_s),  \frac{(s, e_s)(e_s,
f_s)}{(s, e_s, f_s)}, 3e_s, \frac{3e_s(se_s, sf_s, e_sf_s)}{(s,
e_s)(e_s, f_s)}, \frac{e_sf_s}{(e_s,\ f_s)}, \frac{se_sf_s}{(se_s,
ef_s, e_sf_s)}\right), & \mbox{if}\, 2\mid s.
\end{array}\right.$$

Note that $M_3$ is obtained from $M_3'$ by  multiplying its  rows
1, 2 , 5 by 2,  columns 3, 7 by 2,  column 4 by 8. Then
we have that there are  integers $t_i$  such that $S_i(M_3)=2^{t_i}S_i(M_3')$, for $1\leq i\leq 7$.

\indent{$\bullet$ } $n=2s$ with $s$  odd.\\
\indent It follows from proposition 2.3 that  $2\nmid e_s$ and
$2\nmid f_s$. Moreover, $\Delta_i(M_3')$ is odd and hence
$S_i(M_3')$ is odd. Since  $\det
(M_3[3,4,6,7|1,2,5,6])=-9e_s^3(e_s+4f_s)$ is odd, where
$M_3[3,4,6,7|1,2,5,6]$ is the submatrix that lies in the rows 3, 4,
6, 7 and columns 1, 2, 5, 6 of $M_3$. Thus $t_1=t_2=t_3=t_4=0$. Note
that  every  nonzero element in rows 1, 2, 5,  columns 3, 4, 7 of
$M_3$ is  even and on the main diagonal, so every $5\times5$
submatrix of $M_3$ must contain at least one row and at least one
column of them. Thus $2^2\mid \Delta_5(M_3)$. Since
$\det\left(M_3[1, 3, 4, 6, 7 |1, 2, 3, 5,
6]\right)=36se_s^3(e_s+4f_s)$, then $2^3$ is not its divisor. Thus
$t_5=2$. As above, $2^4\mid\Delta_6(M_3)$, but $\det\left(M_3[1, 3,
4, 5, 6, 7| 1, 2, 3, 5, 6, 7]\right)=-144se_s^3f_s(e_s+4f_s)$, which
is not divisible by $2^5$. So $t_6=4-2=2$. Finally, it is easy to
see that $t_7=8-4=4$. Thus the SNF of $M_3$ here is
$\mbox{diag}\left((s,\, e_s,\, f_s), (e_s,\, f_s), \frac{(s,\,
e_s)(e_s,\, f_s)}{(s,\, e_s,\, f_s)},\,  e_s,\, \frac{4e_s(se_s,\,
sf_s,\, e_sf_s)}{(s,\, e_s)(e_s,\, f_s)},
 \frac{12e_sf_s}{(e_s,\, f_s)}, \frac{48se_sf_s}{(se_s,\, sf_s,\,
 e_sf_s)}\right).$

\indent{ $\bullet$ } $n=2s$ with $s$  even.\\
\indent Let $t=T_2(s)$, then from proposition 2.3, it follows that
$T_2(e_s)=t+1$ and $T_2(f_s)=t$. It is clear that
$S_1(M_3)=S_1(M_3')$, so $t_1=0$. Since
$T_2(\det(M_3[6,7|1,2]))=T_2(sf_s)=2t=T_2(\Delta_2(M_3'))=2t$, then
clearly $t_2=0$. It is not hard to see that the maximal power of 2
contained in each of  the $3\times 3$ minor subdeterminants of $M_3$
is at least $3t+2$, and then we can conclude that
$T_2(\Delta_3(M_3))=3t+2$, since
$\det\left(M_3[4,6,7|1,2,7]\right)=-2sf_s(e_s+4f_s)$ is not
divisible by $2^{3t+3}$. Then
$T_2(S_3(M_3))=T_2(\Delta_3(M_3))-T_2(\Delta_2(M_3))=(3t+2)-2t=t+2$.
So $t_3=T_2(S_3(M_3))-T_2(S_3(M_3'))=(t+2)-t=2$. All the $4\times 4$
minor subdeterminants of $M_3$ contain the divisor $2^{4t+4}$, and
then we can say that $T_2(\Delta_4(M_3))=4t+4$, since
$\det\left(M_3[3,4,6,7|1,2,3,7]]\right)=-12se_sf_s(e_s+4f_s)$ is not
divisible by $2^{4t+5}$. Then
$T_2(S_4(M_3))=T_2(\Delta_4(M_3))-T_2(\Delta_3(M_3)=(4t+4)-(3t+2)=t+2$.
So $t_4=T_2(S_4(M_3))-T_2(S_4(M_3'))=(t+2)-(t+1)=1$. Go on in this
way, we obtain that $t_5=t_6=1$ and $t_7=3$. Thus we get that SNF of
$M_3$ here is $\mbox{diag}\left((s,e_s,f_s),\, (e_s,f_s),\,
\frac{4(s,\, e_s)(e_s,\, f_s)}{(s,\, e_s,\, f_s)},\, 6e_s,\,
\frac{6e_s(se_s,\, sf_s,\, e_sf_s)}{(s,\, e_s)(e_s,\, f_s)},\,
\frac{2e_sf_s}{(e_s,\, f_s)},\, \frac{8se_sf_s}{(se_s,\, sf_s,\,
e_sf_s)}\right).$

\section{Conclusion}

Now we can give the main result as follows.

\begin{theorem}
If $n=2s+1$ odd, then the critical group of $C_4\times C_n$ $(n\ge
3)$ is
$$Z_{(n,\, h_s,\, g_s)}\oplus Z_{(h_s,\, g_s)}\oplus Z_{\frac{(n,\, h_s)(h_s,\, g_s)}{(n,\, h_s,\, g_s)}}
\oplus Z_{h_s}\oplus Z_{\frac{h_s(nh_s, ng_s, h_sg_s)}{(n, h_s)(h_s,
g_s)}} \oplus Z_{\frac{h_sg_s}{(h_s, g_s)}}\oplus
Z_{\frac{4nh_sg_s}{(nh_s, ng_s, h_sg_s)}}.$$ If $n=2s$ with $s$
odd, then the critical group of $C_4\times C_n$ $(n\ge 3)$ is
$$Z_{(s, e_s, f_s)}\oplus Z_{(e_s, f_s)}\oplus Z_{\frac{(s, e_s)(e_s, f_s)}{(s, e_s, f_s)}}
\oplus Z_{e_s}\oplus Z_{\frac{4e_s(se_s, sf_s, e_sf_s)}{(s, e_s)(e_s, f_s)}} \oplus Z_{\frac{12e_sf_s}{(e_s, f_s)}}\oplus
Z_{\frac{48se_sf_s}{(se_s,\, sf_s,\, e_sf_s)}}.$$
If $n=2s$ with $s$  even, then the critical group of $C_4\times C_n$ $(n\ge
3)$ is
$$Z_{(s,\, e_s,\, f_s)}\oplus Z_{(e_s,\, f_s)}\oplus Z_{\frac{4(s,\, e_s)(e_s,\, f_s)}{(s,\, e_s,\, f_s)}}
\oplus Z_{6e_s}\oplus Z_{\frac{6e_s(se_s, sf_s, e_sf_s)}{(s,
e_s)(e_s,\, f_s)}} \oplus Z_{\frac{2e_sf_s}{(e_s, f_s)}}\oplus
Z_{\frac{8se_sf_s}{(se_s, sf_s, e_sf_s)}}.$$
\end{theorem}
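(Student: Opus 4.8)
The plan is to turn the matrix reductions already carried out in Sections 3 and 4 directly into the group $K(G)$. First I would assemble the chain of equivalences: by Lemma 3.1 and the remark following it, $L(G)\sim I_{4n-8}\oplus(M-I)$, and by Lemma 4.1, $M-I\sim(0)\oplus M_1$, so that $L(G)\sim I_{4n-8}\oplus(0)\oplus M_1$. Because $L(G)$ has rank $4n-1$ with one-dimensional kernel, the single $(0)$ block is precisely the zero invariant factor that, under the isomorphism (1.2), carries the free summand $Z$; in particular $M_1$ is a nonsingular $7\times 7$ integral matrix and $\mathrm{coker}(M_1)$ is finite. The $4n-8$ unit invariant factors contribute only trivial cyclic groups, so the entire critical group is carried by $M_1$: writing $s_1\mid s_2\mid\cdots\mid s_7$ for the invariant factors of $M_1$, I obtain $K(G)\cong\bigoplus_{i=1}^{7}Z/s_iZ$. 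As a consistency check, $\prod_{i=1}^{7}s_i=|\det M_1|$ should equal the number of spanning trees of $C_4\times C_n$ by Kirchhoff's theorem.

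It then remains only to read off the $s_i$ from the Smith normal form of $M_1$ computed in Section 4, separating the three parity cases. For $n=2s+1$ odd, $\mathrm{SNF}(M_1)$ is displayed outright, and substituting its seven diagonal entries for $s_1,\dots,s_7$ gives the first formula of the theorem verbatim. For $n=2s$ even, I would use that unimodular equivalence preserves the Smith normal form: since $M_2=L_1M_1R_1$ with $L_1,R_1$ unimodular, since $U$ is unimodular (hence so is $U^{s+1}$) giving $U^{s+1}M_2\sim M_2$, and since $M_3=L_3U^{s+1}M_2R_3$ with $L_3,R_3$ unimodular, we get $\mathrm{SNF}(M_3)=\mathrm{SNF}(M_1)$. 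The two displayed forms of $\mathrm{SNF}(M_3)$, one for $s$ odd and one for $s$ even, then produce the second and third formulas after the same substitution.

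The genuinely delicate part is not this final assembly but the evaluation of $\mathrm{SNF}(M_3)$ in the even case, which is exactly why that computation is placed before the theorem. There $M_3$ does not split as a direct sum of smaller blocks, so one is forced to compute the determinantal divisors $\Delta_i(M_3)$ directly and to track the precise power of $2$ dividing each; this is where Proposition 2.3 becomes indispensable, and the further split into $s$ odd versus $s$ even arises because $T_2(e_s)=t+1$ while $T_2(f_s)=t$ behave differently according to whether $2\mid s$. Within the statement of the theorem itself the only point still requiring care is to confirm that the seven displayed entries already form a divisibility chain $s_1\mid s_2\mid\cdots\mid s_7$, so that they really are the invariant factors and $\bigoplus_{i}Z/s_iZ$ is the invariant-factor decomposition of $K(G)$. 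This is automatic, since the entries were obtained by the determinantal-divisor rule $s_i=\Delta_i/\Delta_{i-1}$, which by construction yields the invariant factors in increasing divisibility order.
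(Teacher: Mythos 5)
Your proposal is correct and takes essentially the same route as the paper: Theorem 5.1 has no separate proof there, being read off directly from the chain $L(G)\sim I_{4n-8}\oplus(0)\oplus M_1$ of Sections 3--4 and the Smith normal forms of $M_1$ computed in Section 4, which is exactly the assembly you describe. Your added remarks --- that the single $(0)$ block accounts for the free $Z$ summand in (1.2) so that $K(G)\cong\mathrm{coker}(M_1)$, and that the seven displayed entries automatically form a divisibility chain because they arise as quotients $\Delta_i/\Delta_{i-1}$ of determinantal divisors --- are correct and merely make explicit what the paper leaves implicit.
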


\begin{exam}
To give an illustration of theorem 5.1,
we consider the three graphs $C_4\times C_4$, $C_4\times C_5$ and $C_4\times C_6$.
Note that $e_0=0,\, e_1=1,\, e_2=4,\, e_3=15,\, e_4=56,\,  e_5=209,\,  e_6=780$,
$f_0=0,\, f_1=1,\, f_2=6,\, f_3=35,\, f_4=204,\, f_5=1189,\, f_6=6930$.
Then by theorem 5.1 we have that $K(C_4\times C_4)=(Z_2)^2\oplus Z_8\oplus (Z_{24})^3\oplus Z_{96}$;
$K(C_4\times C_5)=(Z_{19})^2\oplus Z_{779}\oplus Z_{15580}$ and
$K(C_4\times C_6)=Z_5\oplus (Z_{15})^2\oplus Z_{60}\oplus Z_{1260}\oplus Z_{5040}$. Maple gives
the identical result.
\end{exam}

Let $H_n(m)=u_n(m)+u_{n+1}(m)$. Clearly, $H_n(2)=h_n$ and
$H_n(4)=g_n$.\\

\begin{theorem}
 If $n_1\mid n_2$, then $K(C_4\times C_{n_1})$ is a subgroup of $K(C_4\times C_{n_2})$.
\end{theorem}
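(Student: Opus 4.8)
The plan is to reduce the statement to seven elementary numerical divisibilities. By Theorem 5.1 (together with the reduction $L(G)\sim I_{4n-8}\oplus M_1$ with $M_1$ of size $7\times 7$), each of $K(C_4\times C_{n_1})$ and $K(C_4\times C_{n_2})$ is presented as a sum of exactly seven cyclic groups $Z_{t_1}\oplus\cdots\oplus Z_{t_7}$ with $t_1\mid t_2\mid\cdots\mid t_7$, these $t_i$ being the diagonal of the Smith normal form of $M_1$. Since $a\mid b$ forces $Z_a$ to embed into $Z_b$ (as the image of multiplication by $b/a$), and since a direct sum of embeddings is an embedding, it suffices to pair the $i$-th factor of one group with the $i$-th factor of the other and show, writing $t_i(n)$ for the $i$-th invariant factor of $K(C_4\times C_n)$,
$$t_i(n_1)\mid t_i(n_2),\qquad 1\le i\le 7.\eqno(\ast)$$
Only the sufficiency direction is needed, so no general subgroup criterion for abelian $p$-groups is required; the whole theorem collapses to verifying $(\ast)$.

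The engine for $(\ast)$ is a short list of divisibility lemmas for the sequences. From Proposition 2.2, equation (2.5), $u_d(m)\mid u_{dk}(m)$ for every $k\ge 0$, so $d\mid n$ gives $e_d\mid e_n$ and $f_d\mid f_n$. Next, the addition formula for $u$ yields the factorization $u_{2s+1}(m)=\bigl(u_{s+1}(m)-u_s(m)\bigr)\,H_s(m)$ with $H_s(m)=u_{s+1}(m)+u_s(m)$; writing $\gamma,\delta$ for the roots of $t^2-\sqrt{m+4}\,t+1$ one checks $H_s(m)=(\gamma^{2s+1}-\delta^{2s+1})/(\gamma-\delta)$, so $H_s(m)$ is the $(2s+1)$-st term of the divisibility sequence attached to the parameter $\sqrt{m+4}$ (equivalently this follows from (2.5)--(2.6) by induction). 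Hence $(2s_1+1)\mid(2s_2+1)$ gives $H_{s_1}(m)\mid H_{s_2}(m)$ as integers, and specializing $m=2,4$ gives $h_{s_1}\mid h_{s_2}$ and $g_{s_1}\mid g_{s_2}$. The same factorization supplies the cross relations $h_s\mid e_{2s+1}$ and $g_s\mid f_{2s+1}$, which are what will link the odd-$n$ and even-$n$ formulas.

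Because $n_1\mid n_2$, only three configurations arise: both odd; $n_1$ odd and $n_2=2s_2$ even (which, since $n_1$ is odd, forces $n_1\mid s_2$); and both even (so $s_1\mid s_2$). In each case I would substitute the relevant formulas from Theorem 5.1 into $(\ast)$ and check the seven divisibilities one at a time. For both odd the inputs are $n_1\mid n_2$, $h_{s_1}\mid h_{s_2}$, $g_{s_1}\mid g_{s_2}$ combined with elementary manipulations of the gcd-fractions such as $(n,h_s)(h_s,g_s)/(n,h_s,g_s)$; for both even they are $s_1\mid s_2$, $e_{s_1}\mid e_{s_2}$, $f_{s_1}\mid f_{s_2}$; and the mixed case is bridged by $n_1\mid s_2$, $h_{s_1}\mid e_{s_2}$, $g_{s_1}\mid f_{s_2}$ from the previous paragraph. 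A worked check such as $n_1=3\mid n_2=6$ (giving the factors $1,1,1,5,5,35,420$ dividing $1,5,15,15,60,1260,5040$ termwise) confirms the mechanism.

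The hard part will be the numerics in the mixed-parity comparison and in the two flavors of the even formula ($s$ odd versus $s$ even), where corresponding invariant factors differ by explicit constants: the prefactors $4,12,48$ in one even case against $6,2,8$ in the other, and the prefactor $4$ in the odd case. To establish $(\ast)$ there one must control the exact $2$-adic and $3$-adic valuations of $e_s$ and $f_s$, and this is precisely where Proposition 2.3 is indispensable, since it pins down $T_2(e_s),T_2(f_s),T_3(e_s),T_3(f_s)$. With those valuations in hand, the constant prefactors and the gcd-fractions on the $n_1$ side can be shown to divide their counterparts on the $n_2$ side uniformly. Organizing this valuation bookkeeping so that a single argument covers all sub-cases, rather than a proliferation of ad hoc computations, is the crux of the proof.
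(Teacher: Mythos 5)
Your proposal follows essentially the same route as the paper: reduce the claim to the termwise divisibility $t_i(n_1)\mid t_i(n_2)$ of the seven invariant factors, prove the sequence divisibilities $e_{s_1}\mid e_{s_2}$, $f_{s_1}\mid f_{s_2}$, $h_{s_1}\mid h_{s_2}$, $g_{s_1}\mid g_{s_2}$ together with the cross-relations $h_s\mid e_{kn_1}$, $g_s\mid f_{kn_1}$ via the identity $u_{2s+1}(m)=\bigl(u_{s+1}(m)+u_s(m)\bigr)\bigl(u_{s+1}(m)-u_s(m)\bigr)$ and $u_p(m)\mid u_{pq}(m)$, and then split into the same three parity cases. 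If anything you are more explicit than the paper, which asserts the final termwise check without discussing the constant prefactors and gcd-fractions that you correctly identify as requiring the valuations of Proposition 2.3.
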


\begin{proof}
We only need to prove that every invariant factor of $K(C_4\times
C_{n_1})$ is
a divisor of the corresponding one of $K(C_4\times C_{n_2})$. We distinguish three cases.\\
\indent $Case$ 1.  $n_1=2s+1$  and $n_2=(2k+1)(2s+1)$.\\
\indent Let $p=2s+1$, $q=2k+1$, then $H_{\frac{\lfloor n_2 \rfloor}{2}}(m)=H_{pk+s}(m)$.
Since $\alpha\beta=1$, then from the definition we can directly verify that $u_{pk+s}(m)=v_{pk}u_s(m)+u_{pk-s}(m)$,
$u_{pk+s+1}(m)=v_{pk}(m)u_{s+1}(m)+u_{pk-s-1}(m)$.
Thus $H_{pk+s}(m)=v_{pk}(m)H_s(m)+H_{pk-s-1}(m)=v_{pk}(m)H_s(m)+H_{p(k-1)+s}=
\cdots=\left(\sum\limits_{i=1}^kv_{ip}(m)+1\right)H_s(m)$.
It means that  $H_s(m)\mid H_{pk+s}(m)$ and hence $h_s\mid h_{pk+s}$,\, $g_s\mid g_{pk+s}$. So every invariant factor
of $K(C_4\times C_{2s+1})$ is a
divisor of the corresponding one of  $K(C_4\times C_{(2k+1)(2s+1)})$.\\
\indent $Case$ 2.\, $n_1=2s+1$ and $n_2=2k(2s+1)$.\\
\indent Since one can verify that
$(u_n(m)+u_{n+1}(m))(u_n(m)-u_{n+1}(m))=-u_{2n+1}(m)$ and $u_{n}(m)=v_p(m)u_{n-p}(m)-u_{n-2p}(m)$,
we have that $H_n(m)\mid u_{2n+1}(m)$ and
if $p\mid n$, then $u_p(m)\mid u_n(m)$.  Thus $H_s(m)\mid u_{n_1}(m)$, and $u_{n_1}(m)\mid u_{kn_1}(m)$.
Then $H_s(m)\mid u_{kn_1}(m)$. It
means that $h_s\mid e_{kn_1}$ and $g_s\mid f_{kn_1}$. So every invariant factor
of $K(C_4\times C_{2s+1})$ is a
divisor of the corresponding one of  $K(C_4\times C_{2k(2s+1)})$.\\
\indent $Case$ 3.\, $n_1=2s$  and $n_2=2ks$.\\
\indent Since $u_s(m)\mid u_{ks}(m)$, then
 $e_s\mid e_{ks}$ and $f_s\mid f_{ks}$. So every invariant factor
of $K(C_4\times C_{2s})$ is a
divisor of the corresponding one of  $K(C_4\times C_{2ks})$.
\end{proof}

\begin{theorem}
The  spanning tree number  of $C_4\times C_n$ ($n\geq 3$)  is
$2^73^2ne_{\frac{n}{2}}^4f_{\frac{n}{2}}^2$, i.e.,
$$\frac{n}{4^{n+1}}\left((\sqrt{3}+1)^{n}-(\sqrt{3}-1)^{n}\right)^4\cdot
\left((\sqrt{2}+1)^{n}-(\sqrt{2}-1)^{n}\right)^2.$$
\end{theorem}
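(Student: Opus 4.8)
The plan is to invoke Kirchhoff's Matrix-Tree Theorem, which (as recalled in the Introduction) asserts that the number $\kappa$ of spanning trees equals the product $t_1\cdots t_{4n-1}$ of all invariant factors of $K(C_4\times C_n)$. By the reductions of Sections 3 and 4, $L(C_4\times C_n)$ is equivalent to $I_{4n-8}\oplus(0)\oplus M_1$, so these invariant factors consist of $4n-8$ copies of $1$ together with the seven diagonal entries of $\mathrm{SNF}(M_1)$ listed in Theorem 5.1. Hence $\kappa$ is simply the product of those seven entries, and the whole proof reduces to multiplying them out in each of the three cases.

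First I would carry out the telescoping multiplication. In every case the seven factors are built from a triple gcd $A$, a pairwise gcd $B$, and ratios involving $A$, $B$ and a mixed gcd such as $(nh_s,ng_s,h_sg_s)$; multiplying them in the order $A\cdot C\cdot E\cdot G$ collapses the gcd's successively, and the leftover $B\cdot D\cdot F$ collapses in the same way. For odd $n=2s+1$ this yields $\kappa=4n\,h_s^4g_s^2$, while for even $n=2s$ (both parities of $s$) it yields $\kappa=2^8 3^2\,s\,e_s^4f_s^2$. Since $n/2=s$ in the even case, the latter is already $2^7 3^2\,n\,e_{n/2}^4f_{n/2}^2$, as claimed.

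The remaining, and genuinely non-routine, step is to rewrite the odd-case answer $4n\,h_s^4g_s^2$ in the unified form. Here I would pass to the closed forms in $\alpha,\beta$: using $\alpha(2)=\tfrac{(\sqrt3+1)^2}{2}$, $\beta(2)=\tfrac{(\sqrt3-1)^2}{2}$ and $\alpha(4)=(\sqrt2+1)^2$, $\beta(4)=(\sqrt2-1)^2$, the half-index values satisfy $\alpha(2)^{1/2}=\tfrac{\sqrt3+1}{\sqrt2}$ and $\alpha(4)^{1/2}=\sqrt2+1$. A short computation using $1+\alpha(2)=\sqrt3(\sqrt3+1)$ and $1+\alpha(4)=2\sqrt2(\sqrt2+1)$ then gives the key identities $h_s=e_s+e_{s+1}=\sqrt6\,e_{n/2}$ and $g_s=f_s+f_{s+1}=2\sqrt2\,f_{n/2}$, whence $h_s^4=36\,e_{n/2}^4$ and $g_s^2=8\,f_{n/2}^2$. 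Substituting gives $\kappa=4n\cdot36\cdot8\,e_{n/2}^4f_{n/2}^2=2^7 3^2\,n\,e_{n/2}^4f_{n/2}^2$, uniform across all three cases.

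Finally, the explicit radical expression follows by substituting $e_{n/2}=\tfrac{1}{2^{n/2}\cdot2\sqrt3}\big((\sqrt3+1)^n-(\sqrt3-1)^n\big)$ and $f_{n/2}=\tfrac{1}{4\sqrt2}\big((\sqrt2+1)^n-(\sqrt2-1)^n\big)$ into $2^7 3^2 n e_{n/2}^4 f_{n/2}^2$ and collecting the powers of $2$ and $4$, which reproduces $\frac{n}{4^{n+1}}\big((\sqrt3+1)^n-(\sqrt3-1)^n\big)^4\big((\sqrt2+1)^n-(\sqrt2-1)^n\big)^2$. I expect the main obstacle to be precisely the odd-case reconciliation: the product of invariant factors naturally produces the integer sequences $h_s,g_s$ indexed by $s$, and turning these into the half-indexed $e_{n/2},f_{n/2}$ requires the observation that $\alpha(2),\alpha(4)$ (and their $\beta$-counterparts) are perfect squares of the outer units $\tfrac{\sqrt3+1}{\sqrt2}$ and $\sqrt2+1$; the gcd telescoping and the final radical bookkeeping are routine by comparison.
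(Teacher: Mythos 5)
Your proposal is correct and follows essentially the same route as the paper: the paper obtains $\kappa$ as $(\det X)(\det Y)=4n\,h_s^4g_s^2$ for odd $n$ and as $\det(M_3)=2^83^2\,s\,e_s^4f_s^2$ for even $n$ --- exactly the values your telescoped product of the seven invariant factors yields --- and then performs the same ``direct calculation'' with $\alpha,\beta$ that you describe (your key identities $h_s=\sqrt6\,e_{n/2}$ and $g_s=2\sqrt2\,f_{n/2}$ check out, and they are precisely the content of the paper's assertions $h_s^4=\tfrac{1}{4^{n+1}}((\sqrt3+1)^n-(\sqrt3-1)^n)^4$ and $g_s^2=\tfrac14((\sqrt2+1)^n-(\sqrt2-1)^n)^2$). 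The only cosmetic difference is that the paper takes determinants of the intermediate matrices directly, which requires only unimodularity of the reducing matrices rather than the full Smith normal form analysis behind Theorem 5.1 that your version imports.
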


\begin{proof}
We prove this theorem by distinguishing two cases.\\
\indent $Case$\, 1:  $n=2s+1$.\\
\indent A direct calculation shows that
$h_s^4=(e_s+e_{s+1})^4=\frac{1}{4^{n+1}}\left((\sqrt{3}+1)^{n}-(\sqrt{3}-1)^{n}\right)^4$
and
$g_s^2=(f_s+f_{s+1})^2=\frac{1}{4}\left((\sqrt{2}+1)^{n}-(\sqrt{2}-1)^{n}\right)^2.$\\
\indent From (4.3), we know that the spanning tree number of
$C_4\times C_n$ of this case  is $(\det X)\cdot(\det Y)=4nh_s^4g_s^2
=\frac{n}{4^{n+1}}\left((\sqrt{3}+1)^{n}-(\sqrt{3}-1)^{n}\right)^4\cdot
\left((\sqrt{2}+1)^{n}-(\sqrt{2}-1)^{n}\right)^2\\=2^73^2ne_{\frac{n}{2}}^4f_{\frac{n}{2}}^2$.\\
\indent $Case$\, 2:  $n=2s$.\\
\indent  From (4.4), we know that the spanning tree number of
$C_4\times C_n$  of this case is
$\det(M_3)=2^8 3^2se_s^4f_s^2=2^73^2ne_{\frac{n}{2}}^4f_{\frac{n}{2}}^2$.
\end{proof}

\begin{cor}
For every $n\geq 3$, we have that
$\prod\limits_{j=1}^{n-1}\left(4-2\cos\frac{2\pi
j}{n}\right)^2\left(6-2\cos\frac{2\pi j}{n}\right)
=\frac{1}{4^{n+2}}\left((\sqrt{3}+1)^{n}-(\sqrt{3}-1)^{n}\right)^4\cdot
\left((\sqrt{2}+1)^{n}-(\sqrt{2}-1)^{n}\right)^2. $
\end{cor}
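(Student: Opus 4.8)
The plan is to compute the spanning tree number $\kappa(C_4\times C_n)$ a second way, directly from the Laplacian spectrum via Kirchhoff's Matrix-Tree Theorem, and then to equate the result with the closed form already established in Theorem 5.3. Since $C_4\times C_n$ has $4n$ vertices, the Matrix-Tree Theorem gives $\kappa(C_4\times C_n)=\frac{1}{4n}\prod(\text{nonzero Laplacian eigenvalues})$, so the left-hand product in the corollary should emerge, up to explicit constants, as this product of eigenvalues.

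First I would use the standard fact that the Laplacian spectrum of a Cartesian product $G_1\times G_2$ consists of all sums $\mu+\nu$ with $\mu$ a Laplacian eigenvalue of $G_1$ and $\nu$ one of $G_2$. The Laplacian eigenvalues of $C_4$ are $0,2,4,2$ (i.e. $2-2\cos\frac{2\pi k}{4}$ for $k=0,1,2,3$), while those of $C_n$ are $\lambda_j=2-2\cos\frac{2\pi j}{n}$ for $j=0,\dots,n-1$. Adding the $C_4$ eigenvalue $0$ contributes the factors $\lambda_j$; adding $2$ contributes $4-2\cos\frac{2\pi j}{n}$, and this happens twice since $2$ occurs in $C_4$ with multiplicity two; adding $4$ contributes $6-2\cos\frac{2\pi j}{n}$. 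The unique zero of $L(C_4\times C_n)$ arises only from $k=0,\,j=0$, so deleting that single term, the product of all nonzero eigenvalues factors as
$$\left(\prod_{j=1}^{n-1}\Big(2-2\cos\tfrac{2\pi j}{n}\Big)\right)\left(\prod_{j=0}^{n-1}\Big(4-2\cos\tfrac{2\pi j}{n}\Big)\right)^2\left(\prod_{j=0}^{n-1}\Big(6-2\cos\tfrac{2\pi j}{n}\Big)\right).$$

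Next I would evaluate the boundary terms. The first factor equals $n^2$: it is exactly $\prod_{j=1}^{n-1}\lambda_j$, the product of nonzero Laplacian eigenvalues of $C_n$, which by the Matrix-Tree Theorem applied to $C_n$ equals $n\cdot\kappa(C_n)=n\cdot n=n^2$ (equivalently, a standard trigonometric product identity). In the remaining two factors I would split off the $j=0$ term: at $j=0$ one has $4-2\cos 0=2$ and $6-2\cos 0=4$, so that $\prod_{j=0}^{n-1}(4-2\cos\tfrac{2\pi j}{n})=2\prod_{j=1}^{n-1}(4-2\cos\tfrac{2\pi j}{n})$ and $\prod_{j=0}^{n-1}(6-2\cos\tfrac{2\pi j}{n})=4\prod_{j=1}^{n-1}(6-2\cos\tfrac{2\pi j}{n})$. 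Collecting the constants $n^2\cdot 2^2\cdot 4=16n^2$, the product of nonzero eigenvalues becomes $16n^2\,P$, where $P$ is precisely the left-hand side of the corollary.

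Finally I would close the loop by equating the two expressions for $\kappa$. From the above, $P=\frac{1}{16n^2}\prod(\text{nonzero eigenvalues})=\frac{4n}{16n^2}\,\kappa(C_4\times C_n)=\frac{1}{4n}\,\kappa(C_4\times C_n)$. Substituting the closed form of $\kappa(C_4\times C_n)$ from Theorem 5.3 and simplifying the constant $\frac{1}{4n}\cdot\frac{n}{4^{n+1}}=\frac{1}{4^{n+2}}$ yields exactly the claimed right-hand side. I do not anticipate a genuine obstacle here; the only points demanding care are bookkeeping ones — correctly identifying that the single zero eigenvalue comes from $(k,j)=(0,0)$, that the multiplicity-two eigenvalue $2$ of $C_4$ produces the square on the factor $4-2\cos\frac{2\pi j}{n}$, and that the extraction of the $j=0$ boundary constants builds up the power of $4$ in the denominator.
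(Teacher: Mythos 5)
Your proof is correct and follows essentially the same route as the paper: both compute $\kappa(C_4\times C_n)$ from the Laplacian spectrum of the Cartesian product via the Matrix--Tree Theorem, use $\prod_{j=1}^{n-1}\bigl(2-2\cos\frac{2\pi j}{n}\bigr)=n^2$ to absorb the $C_n$-factor, and equate with the closed form of Theorem 5.3. The only difference is cosmetic --- you keep the $j=0$ terms inside the products and peel them off afterwards, whereas the paper lists the eigenvalues $2,2,4$ separately from the start.
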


\begin{proof}
It is not difficult to know that the Laplacian eigenvalues of
$C_n$ are $(2-2\cos\frac{2\pi j}{n}),\ 0\leq j\leq n-1$. Then it
follows from the argument of the second section of [7] that the
Laplacian eigenvalues of $C_4\times C_n$ are: $0,\, 2,\, 2,\, 4,\,
2-2\cos\frac{2\pi j}{n}$, $4-2\cos\frac{2\pi j}{n}$\ (with
multiplicity 2), $6-2\cos\frac{2\pi j}{n}$, where $1\leq j\leq n-1$.
Then  by the well known Kirchhoff Matrix-Tree Theorem we know  the
spanning tree number of $C_4\times C_n$ is
$\frac{4}{n}\prod\limits_{j=1}^{n-1}\left(2-2\cos\frac{2\pi
j}{n}\right)\left(4-2\cos\frac{2\pi j}{n}\right)^2
\left(6-2\cos\frac{2\pi j}{n}\right).$ Since $C_n$ has $n$ spanning
trees, we have
$\frac{1}{n}\prod\limits_{j=1}^{n-1}(2-2\cos\frac{2\pi j}{n})=n.$
Thus the spanning tree number of $C_4\times C_n$  equals
$4n\prod\limits_{j=1}^{n-1}(4-2\cos\frac{2\pi
j}{n})^2(6-2\cos\frac{2\pi j}{n}).$\, Recall theorem 5.3, we have
that $4n\prod\limits_{j=1}^{n-1}\left(4-2\cos\frac{2\pi
j}{n}\right)^2\left(6-2\cos\frac{2\pi j}{n}\right)
=\frac{n}{4^{n+1}}\left((\sqrt{3}+1)^{n}-(\sqrt{3}-1)^{n}\right)^4\cdot
\left((\sqrt{2}+1)^{n}-(\sqrt{2}-1)^{n}\right)^2.$ So this corollary
holds.
\end{proof}

\end{document}